\def \To{\longrightarrow}
\def \dim{\operatorname{dim}}
\def \Gr{\operatorname{Gr}}
\def \GR{\mathcal{GR}}
\def \mod{\operatorname{mod}}
\def \C{\mathbb{C}}
\def \q{\mathbbm{q}}
\def \c{\mathcal{C}}
\def \D{\Delta}
\def \d{\delta}
\def \e{\varepsilon}
\def \Z{\mathbb{Z}}
\def \k{\mathrm{k}}
\def \1{\mathbf{1}}
\def \Id{\operatorname{Id}}
\numberwithin{equation}{section}
\newtheorem{theorem}{Theorem}[section]
\newtheorem{lemma}[theorem]{Lemma}
\newtheorem{proposition}[theorem]{Proposition}
\newtheorem{corollary}[theorem]{Corollary}
\newtheorem{remark}[theorem]{Remark}
\begin{document}

\title{THE GREEN RINGS OF POINTED TENSOR CATEGORIES OF FINITE TYPE}

\subjclass[2010]{19A22, 18D10, 16G20}

\keywords{Green ring, tensor category, quiver representation}

\author[H.-L. Huang]{Hua-Lin Huang}
\address{School of Mathematics, Shandong University,
Jinan 250100, China} \email{hualin@sdu.edu.cn}

\author[F. Van Oystaeyen]{Fred Van Oystaeyen}
\address{Department of Mathematics and Computer Science, University of Antwerp, Middelheimlaan
1, B-2020 Antwerp, Belgium} \email{fred.vanoystaeyen@ua.ac.be}

\author[Y. Yang]{Yuping Yang}
\address{School of Mathematics, Shandong University,
Jinan 250100, China} \email{yuping.yang@mail.sdu.edu.cn}

\author[Y. Zhang]{Yinhuo zhang}
\address{Department WNI, University of Hasselt, Universitaire Campus, 3590 Diepeenbeek, Belgium}
\email{yinhuo.zhang@uhasselt.be}

\date{}
\maketitle

\begin{abstract}
In this paer, we compute the Clebsch-Gordan formulae and the Green rings of connected pointed tensor categories of finite type.
\end{abstract}

\section{Introduction}
Throughout the paper, $\k$ is an algebraically closed field of characteristic
zero, and (co)algebras, (co)modules, categories, etc are over $\k.$
Concepts and notations about tensor categories are adopted from \cite{egno}.

Let $\c$ be a tensor category, that is, a locally finite abelian
rigid monoidal category in which the neutral object is simple. Note
that the Jordan-H\"older theorem and the Krull-Schmidt theorem hold in $\c.$
For each $M \in \c,$ let $[M]$ denote the associated iso-class. The
Green ring $\GR(\c)$ of $\c$ is the ring with generators the
iso-classes $[M]$ of $\c,$ and relations \[ [M]+[N]=[M \oplus N],
\quad [M] \cdot [N] = [M \otimes N]. \] By the Krull-Schmidt
theorem, the ring $\GR(\c)$ is a free $\Z$-module, and the set of
indecomposable iso-classes of $\c$ forms a basis. When $\c=H$-$\mod,$ where $H$ is a Hopf algebra, the ring $\GR(\c)$ is also called the Green ring of $H.$

The Green ring $\GR(\c)$ provides a convenient way of organizing
information about direct sums and tensor products of $\c.$ When $\c$
is the category of modular representations of a finite group, the
ring $\GR(\c)$ was investigated systematically for the first time by
J. A. Green \cite{gr}, hence the notion Green ring is widely used in
literature. Green rings have played an important role in the modular
representation theory, see \cite{b} and references therein.

Without a doubt, Green rings should be equally important in the
study of those tensor categories of (co)modules over (co)quasi-Hopf
algebras (or quasi-quantum groups) which are not semisimple, see
\cite{egno,eo}. However, so far there are not many results obtained
about the Green rings of such tensor categories due to the obvious
complexity. In \cite{w1}, the Green ring (called representation ring
there) of the quantum double of a finite group was described by a
direct sum decomposition. In a subsequent work \cite{w2}, the Green
ring of the twisted quantum double of a finite group was considered
and some results about the Grothendieck ring, that is the quotient
of the Green ring by the ideal of short exact sequences, were
obtained. Recently, the Green rings of the Taft algebras and the generalized Taft algebras were
presented by generators and relations in \cite{coz} and \cite{lz} respectively.

The aim of this paper is to compute the Clebsch-Gordan formulae and to present
the Green rings of pointed tensor categories of finite type as
classified in \cite{qha3}. Such tensor categories are actually the
comodule categories of some pointed coquasi-Hopf algebras of finite
corepresentation type, or equivalently the module categories of some elementary quasi-Hopf algebras of finite representation type, in which there are only finitely many
iso-classes of indecomposable objects, and as such their Green rings
are suitable to study. Among them, the module categories of the Taft and generalized Taft algebras are particular examples. In addition, these tensor categories can be
presented by quiver representations and we can take full advantage
of the handy quiver techniques (see e.g. \cite{ass,w}) for the
computations. In Section 2, we recall some necessary facts about
pointed tensor categories of finite type. The Clebsch-Gordan
formulae for such tensor categories are computed in Section 3.
Finally in Section 4, the Green rings are determined. The results
obtained here extend those in \cite{cc,coz,lz} to a much greater range in a unified way.

\section{Pointed tensor categories of finite type}

In this section, we recall pointed tensor categories of finite type
and their presentations via quiver representations.

\subsection{}
Let $n>1$ be a positive integer and $C_n=<g|g^n=1>$ the cyclic group
of order $n.$ By $Z_n$ we denote the cyclic quiver with vertices
indexed by $C_n$ and with arrows $a_i: g^i \to g^{i+1}$ where the
indices $i$ are understood as integers modulo $n.$ Let $p_i^l$
denote the path $a_{i+l-1} \cdots a_{i+1}a_i$ of length $l.$ The
$\k$-span of $\{p_i^l \ | \ 0 \le i \le n-1, \ l \ge 0 \}$ is called
the associated path space of the quiver $Z_n,$ and denoted by $\k
Z_n.$

From now on, let $m'$ denote the remainder of the division of the
integer $m$ by $n.$ There is a natural coalgebra structure on $\k
Z_n$ with coproduct and counit given by \begin{equation}
\D(p_i^l)=\sum_{m=0}^l p_{(i+m)'}^{l-m} \otimes p_i^m, \quad
\e(p_i^l)=\d_{l,0}=\left\{
                     \begin{array}{ll}
                       1, & \hbox{$l=0$;} \\
                       0, & \hbox{otherwise.}
                     \end{array}
                   \right.
\end{equation} This is the so-called path coalgebra of the quiver
$Z_n.$

\subsection{}
The quiver $Z_n$ is in fact a Hopf quiver in the sense of \cite{cr}.
By \cite{qha1}, there exist  graded coquasi-Hopf algebra, also known
as Majid algebra, structures on the path coalgebra $\k Z_n.$
Moreover, the graded coquasi-Hopf structures are parameterized by
some set of 1-dimensional projective representation of $C_n,$ see
\cite{qha2} for a general theorem.

Explicit graded coquasi-Hopf structures on $\k Z_n$ are given in
\cite{qha3}. To state this result, firstly we need to fix some
notations. For any $\hbar \in \k,$ define $m_\hbar=1+\hbar+\cdots
+\hbar^{m-1}$ and $m!_\hbar=1_\hbar \cdots m_\hbar.$ The Gaussian
binomial coefficient is defined by
$\binom{m+n}{m}_\hbar:=\frac{(m+n)!_\hbar}{m!_\hbar n!_\hbar}.$ Let
$\q$ be a primitive $n$-th root of unity. Assume $0 \le s \le n-1$
is an integer and $q$ an $n$-th root of $\q^s.$ For each pair
$(s,q),$ there is a unique graded coquasi-Hopf algebra $\k Z_n(s,q)$
on $\k Z_n$ with multiplication given by
\begin{equation}
p_i^l\cdot
p_j^m=\mathbbm{q}^{-sjl}q^{-jl}\mathbbm{q}^{\frac{s(i+l')[m+j-(m+j)']}{n}}{l+m\choose
l}_{\mathbbm{q}^{-s}q^{-1}}p_{(i+j)'}^{l+m} \ .
\end{equation}
Let $d$ be the order of $q.$ Clearly, $d|n$ if $s=0$ and
$d=\frac{n^2}{(s,n^2)}$ if $1\leq s\leq n-1.$ Let $\k Z_n(d)$ denote
the subcoalgebra of $\k Z_n$ which as a $\k$-space is spanned by
$\{p_i^l \ | \ 0 \le i \le n-1, \ 0 \le l \le d-1 \}.$ The
multiplication (2.1) is closed inside $\k Z_n(d)$ and it becomes a
graded coquasi-Hopf algebra, denoted by $M(n,s,q).$

\subsection{}
A pointed tensor category of finite type is a tensor category in
which there are only finitely many iso-classes of indecomposable
objects and whose simple objects are invertible, see \cite{egno,
qha3}.

By $\c(n,s,q)$ we denote the category of finite-dimensional right
$M(n,s,q)$-comodules. It is proved in \cite{qha3} that any pointed
tensor category of finite type consists of finitely many identical
components, and the connected component containing the neutral
object is equivalent to a deformation of some $\c(n,s,q),$ and See
\cite[Theorem 4.2 and Corollary 4.4]{qha3} for a full description.

In this paper we focus on the Clebsch-Gordan formulae and the Green
ring of $\c(n,s,q).$ The results can be easily extended to the
non-connected case. Now we give an explicit presentation of
$\c(n,s,q)$ by quiver representations. Recall that a representation
of the quiver $Z_n$ is a collection $V=(V_i,T_i)_{0 \le i \le n-1}$
where $V_i$ is a vector space corresponding to the vertex $g^i$ and
$T_i: V_i \to V_{i+1}$ is a linear map corresponding to the arrow
$a_i.$ Given a path $p_i^l,$ we define a corresponding linear map
$T_i^l$ as follows. If $l=0,$ then put $T_i^0=\Id_{V_i}.$ If $l>0,$
put $T_i^l=T_{i+l-1} \cdots T_{i+1}T_i.$ The category $\c(n,s,q)$
consists of the representations $V$ of $Z_n$ such that $T_i^l=0$
whenever $l \ge d.$

The indecomposable objects of $\c(n,s,q)$ can be described as
follows. Assume $0 \le i \le n-1$ and $0 \le e \le d-1.$ Let
$V(i,e)$ be a vector space of dimension $e+1$ with a basis
$\{v_m^i\}_{0 \leq m \leq e}.$  $V(i,e)$ is made into a
representation of $Z_n$ by putting $V(i,e)_j$ the $\k$-span of
$\{v_m^i | (i+m)'=j\}$ and letting $T(i,e)_j$ maps $v_m^i$ to
$v^i_{m+1}$ if $(i+m)'=j.$ Clearly $V(i,e)$ is an object in
$\c(n,s,q)$ and $\{V(i,e)\}_{0\leq i\leq n-1, \ 0\leq e\leq d-1}$ is
a complete set of iso-classes of indecomposable objects of
$\c(n,s,q).$

\subsection{}
For later computations, we need to know how the quiver
representation $V(i,e)$ is viewed as a right $M(n,s,q)$-comodule.
Let \[\delta :  V(i,e) \to V(i,e) \otimes M(n,s,q)\] denote the
comodule structure map, then \begin{equation}
\d(v_m^i)=\sum_{x=m}^{e} v_x^i\otimes
p_{(i+m)'}^{x-m}.\end{equation}

For any two indecomposable objects $V(i,e)$ and $V(j,m)$ of
$\c(n,s,q),$ the comodule structure map of their tensor product
$V(i,e) \otimes V(j,m)$ is given by
\begin{equation}
\d(v_a^i\otimes v_b^j)=\sum_{x=s}^e\sum_{y=t}^m
C_{(i,a)(j,b)}^{(i,x)(j,y)} v_x^i\otimes v_y^j\otimes
p_{(i+a+j+b)'}^{x+y-a-b},
\end{equation}
where $C_{(i,a)(j,b)}^{(i,x)(j,y)}$ is the coefficient of the
product $p_{(i+a)'}^{x-a} \cdot p_{(j+b)'}^{y-b}$ as given in (2.2).
It is clear that $C_{(i,a)(j,b)}^{(i,x)(j,y)}=0$ whenever $x+y-a-b
\geq d.$

\section{The Clebsch-Gordan formulae}

In this section we compute the complete decomposition of $V(i,e) \otimes
V(j,f)$ with the help of some techniques from quivers and their representations.

\subsection{Reduction of $V(i,e)\otimes V(j,f)$ to $V(0,e)\otimes V(0,f)$}

First of all, we list some useful facts of indecomposable comodules of
$M(n,s,q)$.

\begin{lemma}
\begin{gather}
V(0,0)\otimes V(i,e) \cong V(i,e) = V(i,e)\otimes V(0,0),\\
V(1,0)^{\otimes l} = V(l',0), \quad V(1,0)^{\otimes n} = V(0,0), \\
V(1,0)\otimes V(i,e) = V((i+1)',e) = V(i,e)\otimes V(1,0), \\ \forall \ 0\leq i\leq n-1, \ 0\leq e\leq d-1. \notag
\end{gather}
\end{lemma}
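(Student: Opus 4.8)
The plan is to verify each of the three isomorphisms \eqref{...}–\eqref{...} directly at the level of quiver representations, using the explicit description of $V(i,e)$ and the multiplication formula (2.2). Since all three statements concern how the invertible objects $V(i,0)$ interact with arbitrary indecomposables under $\otimes$, and invertible objects are one-dimensional comodules concentrated at a single vertex, the computations should all reduce to bookkeeping with the structure maps (2.3) and (2.4).

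First I would establish \eqref{...}, that $V(0,0)$ is a two-sided tensor unit. The comodule $V(0,0)$ is the trivial one-dimensional comodule $\k v_0^0$ with $\d(v_0^0)=v_0^0\otimes p_0^0$, and $p_0^0$ is the group-like element $g^0=1$. Plugging $i=0$, $e=0$ into the tensor product formula (2.4) and using that $C^{(0,0)(j,y)}_{(0,0)(j,b)}$ is (up to the identity) just the coefficient of $p_0^0\cdot p_{(j+b)'}^{y-b}$, which by (2.2) equals $p_{(j+b)'}^{y-b}$ itself (the scalar collapses to $1$ because $l=0$, $s\cdot 0=0$, and $\binom{m}{0}_\hbar=1$), one reads off that $v_0^0\otimes v_b^j\mapsto v_0^0\otimes v_y^j\otimes p_{(j+b)'}^{y-b}$, which is exactly the comodule structure of $V(j,m)$ transported along $v_b^j\mapsto v_0^0\otimes v_b^j$. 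The same works on the other side. This also identifies $V(0,0)$ with the neutral object.

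Next, for \eqref{...} and \eqref{...} I would compute $V(1,0)\otimes V(i,e)$ explicitly. Here $V(1,0)=\k v_0^1$ sits at vertex $g^1$ with $\d(v_0^1)=v_0^1\otimes p_1^0$, and $p_1^0=g^1$. Feeding $j=1$, $m=0$ (or rather $V(1,0)$ on the left, so the roles in (2.4) are $V(1,0)\otimes V(i,e)$) into the product formula, the relevant coefficient is that of $p_1^0\cdot p_{(i+a)'}^{x-a}$; again $l=0$ kills the $\q^{-sjl}q^{-jl}$ factor and the Gaussian binomial, leaving only the middle factor $\q^{s(i+a)'\cdot 0/n}=1$, so the product is simply $p_{(i+a+1)'}^{x-a}$. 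Hence the map $v_0^1\otimes v_a^i\mapsto v_{a}^{(i+1)'}$ (matching the basis vector of $V((i+1)',e)$ at the appropriate vertex) intertwines the comodule structures, giving $V(1,0)\otimes V(i,e)\cong V((i+1)',e)$; symmetry of the argument with $V(1,0)$ on the right gives the equality $V(i,e)\otimes V(1,0)=V((i+1)',e)$ as well. Taking $e=0$ and iterating yields $V(1,0)^{\otimes l}=V(l',0)$, and since $n'=0$ we get $V(1,0)^{\otimes n}=V(0,0)$.

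The only genuinely delicate point, and the one I would be most careful about, is checking that in each case the scalar factor in (2.2) really does degenerate to $1$ — in particular confirming that the term $\q^{\frac{s(i+l')[m+j-(m+j)']}{n}}$ vanishes into $1$ when one of the lengths is $0$ (so that $(m+j)-(m+j)'$ or the analogous bracket is controlled), and that no sign or root-of-unity factor sneaks in from the indices being reduced modulo $n$. Once that degeneration is confirmed, the three isomorphisms follow by direct comparison of structure maps; there is no homological or categorical subtlety beyond the Krull–Schmidt identification of the resulting representations with the named indecomposables $V((i+1)',e)$, etc. I would present the argument for \eqref{...} in full and then remark that \eqref{...} and \eqref{...} are proved identically.
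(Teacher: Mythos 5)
Your computation of the scalar factor in the product formula (2.2) is not correct, and this is exactly the ``delicate point'' you flagged but did not actually verify. For $p_1^0 \cdot p_{(i+a)'}^{x-a}$ you claim the middle factor is $\q^{s(i+a)'\cdot 0/n}=1$, but that misreads which part of the exponent carries the $l$-dependence. In (2.2) the middle factor is $\q^{\frac{s(i+l')[m+j-(m+j)']}{n}}$; substituting the data of $p_1^0$ gives $i+l' = 1+0 = 1$ (not $0$), while the bracket becomes $(x-a)+(i+a)'-\big((x-a)+(i+a)'\big)'$, which is a \emph{positive} multiple of $n$ as soon as $(x-a)+(i+a)'\geq n$. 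This happens routinely whenever $s\neq 0$, since then $d=n^2/(s,n^2)$ can exceed $n$, and the resulting scalar is a nontrivial root of unity. A similar problem occurs on the other side: the coefficient of $p_{(i+a)'}^{x-a}\cdot p_1^0$ carries the leading factor $\q^{-s(x-a)}q^{-(x-a)}$, which is not $1$ for $x>a$. Hence the naive identification $v_0^1\otimes v_a^i\mapsto v_a^{(i+1)'}$ is not a comodule map in general; the conclusion is still correct, but it needs a nontrivial rescaling of basis (one must check the rescaling is consistent across all $a$), and your proof does not supply it. Your computation for (3.1) is fine, because there $i+l'=0+0=0$ genuinely kills the middle factor.

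The paper takes a more robust route for (3.3) that avoids tracking scalars at all. It only uses that the coefficients $C_x$ are nonzero, which is immediate from the Gaussian binomials being nonzero. It proves (3.2) first, directly from the definition; invertibility of $V(1,0)$ then forces $V(1,0)\otimes V(i,e)$ to be indecomposable (a nontrivial decomposition would transport back to one of $V(i,e)$). Knowing indecomposability, the vertex support and dimension read off from (2.4) identify the summand as $V((i+1)',e)$ by comparison with the classification of indecomposables. Note also the logical order is opposite to yours: the paper derives (3.3) from (3.2), whereas you try to prove (3.3) by explicit computation and obtain (3.2) as a corollary by iteration; that ordering would be fine if the computation were correct, but as written it is not.
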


\begin{proof}
(3.1) and (3.2) are easy by definition. We prove only (3.3). Let $\{v_0^1\}$ and $\{v_k^i\}_{0\leq k\leq e}$ be respectively a basis of $V(1,0)$
and $V(i,e).$ Then $\{v_0^1\otimes v_k^i\}_{0\leq k\leq
e}$ is a basis of $V(1,0)\otimes V(i,e)$ and the comodule
structure is given by
\begin{equation}\d(v_0^1\otimes v_k^i)=\sum_{x=k}^e C_x
v_0^1\otimes v_x^i\otimes p_{(i+k+1)'}^{x-k}.
\end{equation}
Note that $C_x \neq 0$ by (2.4). Now view $V(1,0)\otimes V(i,e)$ as a representation of $Z_n.$ (2.4) implies that $v_0^1\otimes v_k^i$ should lie in the vector space attached to the vertex $g^{(i+k+1)'}.$ On the other hand, by (3.2) $V(1,0)$ is invertible. It follows that $ V(1,0)\otimes V(i,e)$ is indecomposable. Now by comparing it with the complete set of iso-classes of indecomposable comodules of $M(n,s,q),$ it is easy to see that $V(1,0)\otimes V(i,e) = V((i+1)',e).$ Similarly, we have $V(i,e) \otimes V(1,0) = V((i+1)',e).$
\end{proof}

Thanks to the lemma, we can reduce the decomposition of $V(i,e)\otimes V(j,f)$ to $V(0,e)\otimes V(0,f).$

\begin{corollary}
\begin{gather}
V(i,e) = V(1,0)^{\otimes i} \otimes V(0,e) = V(0,e) \otimes V(1,0)^{\otimes i}, \\
V(i,e)\otimes V(j,f) = V(1,0)^{\otimes (i+j)} \otimes V(0,e)\otimes V(0,f).
\end{gather}
\end{corollary}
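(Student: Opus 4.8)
The plan is to derive the Corollary directly from the Lemma by straightforward induction on the number of tensor factors of $V(1,0)$. First I would prove (3.6) by induction on $i$. The base case $i=0$ is immediate from (3.1), which says $V(0,0)\otimes V(0,e)\cong V(0,e)\cong V(0,e)\otimes V(0,0)$, and $V(1,0)^{\otimes 0}=V(0,0)$ by convention (consistent with (3.2)). For the inductive step, suppose $V(i-1,e)=V(1,0)^{\otimes(i-1)}\otimes V(0,e)$. Then, using associativity of $\otimes$ in the tensor category and (3.3) in the form $V(1,0)\otimes V(i-1,e)=V((i-1+1)',e)=V(i',e)$, one gets
\begin{equation}
V(1,0)^{\otimes i}\otimes V(0,e)=V(1,0)\otimes\bigl(V(1,0)^{\otimes(i-1)}\otimes V(0,e)\bigr)=V(1,0)\otimes V(i-1,e)=V(i,e),
\end{equation}
where in the last step one notes that for $0\le i\le n-1$ we have $i'=i$, so no reduction modulo $n$ is needed. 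The right-hand equality $V(i,e)=V(0,e)\otimes V(1,0)^{\otimes i}$ follows the same way using the symmetric version $V(i-1,e)\otimes V(1,0)=V(i',e)$ from (3.3), or alternatively by observing that $V(1,0)$ is invertible and central up to isomorphism by (3.3), so the left and right actions agree.

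Next I would deduce (3.7). Applying (3.6) to both factors gives $V(i,e)=V(1,0)^{\otimes i}\otimes V(0,e)$ and $V(j,f)=V(0,f)\otimes V(1,0)^{\otimes j}$ (using the two different forms available in (3.6)), so
\begin{equation}
V(i,e)\otimes V(j,f)=V(1,0)^{\otimes i}\otimes V(0,e)\otimes V(0,f)\otimes V(1,0)^{\otimes j}.
\end{equation}
It then remains to move the trailing $V(1,0)^{\otimes j}$ past $V(0,e)\otimes V(0,f)$ to the front. By (3.3), $V(1,0)\otimes V(k,g)=V(k,g)\otimes V(1,0)$ for every indecomposable $V(k,g)$; since every object is a direct sum of indecomposables and $\otimes$ is additive, $V(1,0)$ commutes (up to isomorphism) with every object of $\c(n,s,q)$. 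Iterating $j$ times lets us pull $V(1,0)^{\otimes j}$ across $V(0,e)\otimes V(0,f)$, and combining with $V(1,0)^{\otimes i}\otimes V(1,0)^{\otimes j}=V(1,0)^{\otimes(i+j)}$ yields (3.7).

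There is no serious obstacle here; the only point requiring a little care is the bookkeeping of indices modulo $n$. In (3.6) we restrict to $0\le i\le n-1$, so $i'=i$ and the isomorphisms compose cleanly without invoking $V(1,0)^{\otimes n}=V(0,0)$ from (3.2). For (3.7) the exponent $i+j$ may exceed $n-1$, but this is harmless: the identity is stated at the level of $V(1,0)^{\otimes(i+j)}$, and should one wish to reduce to a standard indecomposable one simply applies (3.2) to replace $V(1,0)^{\otimes(i+j)}$ by $V(1,0)^{\otimes(i+j)'}$. Thus the proof is essentially a formal consequence of the Lemma together with associativity and additivity of the tensor product, and I would present it concisely as such.
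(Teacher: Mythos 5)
Your proof is correct and takes the same route as the paper, which simply states that the Corollary "follows easily from Lemma 3.1"; you have merely spelled out the induction and the index bookkeeping. No issues.
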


\begin{proof}
Follows easily from Lemma 3.1.
\end{proof}

\subsection{Decomposition of $V(0,e)\otimes V(0,f)$ at vertices}

In the rest of this section, we write $V=V(0,e)\otimes V(0,f)$ for brevity. View $V$ as a representation of $Z_n$ and let $V_i$ denote the corresponding subspace attached to the vertex $g^i.$ Note that $V=\oplus_{i=0}^{n-1} V_i$ as a vector space. This is the so-called decomposition at vertices, providing essential information for us to detect its direct summands in the next subsection.

With the same notations as in Subsection 2.3, we have a basis $\{v_k^0\otimes v_l^0\}_{0\leq k\leq e,0\leq l\leq f}$ of $V.$ By (2.4), it is clear that
\begin{equation} V_i=\C\{v_k^0\otimes v_l^0
|(k+l)'=i, \ 0\leq k\leq e, \ 0\leq l\leq f \}
\end{equation} and hence its dimension is given by
\begin{equation}
\dim V_i=|\{(k,l)\in \Z\times\Z:(k+l)'=i,\ 0\leq k\leq e,\ 0\leq l\leq
f\}|.
\end{equation}

In order to give an explicit formula for $\dim V_i,$ we need the following technical lemma.
\begin{lemma}
For  integers $0\leq \alpha , \beta \leq n-1$, let $$N_{(\alpha,
\beta)}^k=|\{(x,y)\in \Z\times \Z:(x+y)'=k,0\leq x\leq \alpha, 0\leq
y\leq \beta\}|.$$ If $\alpha \leq \beta$ and $\alpha + \beta \leq n-1,$ then
\begin{equation} N_{(\alpha,\beta)}^k=
\begin{cases} k+1, & 0\leq k\leq \alpha,  \\ \alpha + 1, &\alpha \leq
k \leq \beta, \\ \alpha + \beta +1 -k, & \beta \leq k \leq \alpha
+\beta;
\end{cases}
\end{equation}
if $\alpha \leq \beta$ and $\alpha + \beta \geq n,$ set $\gamma = \alpha + \beta -n+1,$ then
\begin{equation}
N_{(\alpha,\beta)}^k=
\begin{cases}\gamma +1, & 0\leq k\leq \gamma, \\ k+1,  & \gamma \leq k
\leq \alpha, \\ \alpha +1, & \alpha \leq k\leq \beta, \\ \alpha +\beta
+1-k, & \beta \leq k \leq n-1;
\end{cases}
\end{equation}
if $\alpha > \beta$ and $\alpha + \beta \leq n-1,$ then
\begin{equation}
N_{(\alpha,\beta)}^k=
\begin{cases} k+1, & 0\leq k\leq \beta,  \\ \beta + 1, &\beta \leq
k \leq \alpha, \\ \beta + \alpha +1 -k, & \alpha \leq k \leq \alpha
+\beta;
\end{cases}
\end{equation}
if $\alpha > \beta$ and $\alpha + \beta \geq n,$ set $\gamma$ as above, then
\begin{equation}
N_{(\alpha,\beta)}^k=
\begin{cases}\gamma +1, & 0\leq k\leq \gamma, \\ k+1,  & \gamma \leq k
\leq \beta, \\ \beta +1, & \beta \leq k\leq \alpha, \\ \alpha +\beta
+1-k, & \alpha \leq k \leq n-1.
\end{cases}
\end{equation}
\end{lemma}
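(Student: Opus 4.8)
The plan is to reduce the whole statement to one elementary lattice-point count together with a separate analysis of the cyclic ``wrap-around''. First I would remove the asymmetry between $\alpha$ and $\beta$: the map $(x,y)\mapsto(y,x)$ gives a bijection between the sets defining $N_{(\alpha,\beta)}^k$ and $N_{(\beta,\alpha)}^k$, so $N_{(\alpha,\beta)}^k=N_{(\beta,\alpha)}^k$, and formulas (3.11) and (3.12) follow from (3.9) and (3.10) by interchanging $\alpha$ and $\beta$. Thus it suffices to treat the case $\alpha\le\beta$.

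Next I would introduce the ``unreduced'' count
\[
f(k)=|\{(x,y)\in\Z\times\Z:x+y=k,\ 0\le x\le\alpha,\ 0\le y\le\beta\}|
=\max\bigl(0,\ \min(\alpha,k)-\max(0,k-\beta)+1\bigr),
\]
which is just the number of integers in the interval $[\max(0,k-\beta),\min(\alpha,k)]$, and record its standard piecewise shape for $\alpha\le\beta$: $f(k)=k+1$ for $0\le k\le\alpha$, $f(k)=\alpha+1$ for $\alpha\le k\le\beta$, $f(k)=\alpha+\beta+1-k$ for $\beta\le k\le\alpha+\beta$, and $f(k)=0$ for $k<0$ or $k>\alpha+\beta$. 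Note that $N_{(e,f)}^i$ is exactly the quantity appearing in (3.8), so this lemma is what computes $\dim V_i$.

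The link with $N_{(\alpha,\beta)}^k$ comes from the bound $0\le x+y\le\alpha+\beta\le 2n-2$: for $k\in\{0,\dots,n-1\}$ the condition $(x+y)'=k$ holds exactly when $x+y=k$ or $x+y=k+n$, hence $N_{(\alpha,\beta)}^k=f(k)+f(k+n)$. If $\alpha+\beta\le n-1$ then $f(k+n)=0$ for every $k\ge 0$, so $N_{(\alpha,\beta)}^k=f(k)$ and (3.9) is exactly the piecewise formula above (the values for $\alpha+\beta<k\le n-1$ being $0$). If $\alpha+\beta\ge n$, put $\gamma=\alpha+\beta-n+1$. Then $f(k+n)\neq 0$ precisely when $k+n\le\alpha+\beta$, i.e. $0\le k\le\gamma-1$, and in that range $k+n$ lies in the descending part $[\beta,\alpha+\beta]$ of $f$ (because $k+n\ge n>\beta$), so $f(k+n)=\alpha+\beta+1-(k+n)=\gamma-k$; moreover $\gamma-1=\alpha+\beta-n\le\alpha-1<\alpha$, so $k$ lies in the first part of $f$ and $f(k)=k+1$, giving $N_{(\alpha,\beta)}^k=(k+1)+(\gamma-k)=\gamma+1$ for $0\le k\le\gamma-1$. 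For $\gamma\le k\le n-1$ one has $f(k+n)=0$, so $N_{(\alpha,\beta)}^k=f(k)$; substituting the three segments of $f$ (and noting $\gamma\le\alpha\le\beta\le n-1\le\alpha+\beta$, so that $[\gamma,n-1]$ is covered and $f>0$ throughout) yields the remaining cases of (3.10), and in particular $N_{(\alpha,\beta)}^{\,n-1}=\alpha+\beta+1-(n-1)=\gamma+1$, consistent with the value at $k=0$.

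The argument is essentially routine. The two points that need attention are the bookkeeping at the boundaries where the three segments of $f$ overlap (the formulas must, and do, agree there), and checking in the wrap-around range that $k+n$ really falls in the last segment of $f$ while $k$ falls in the first; this is exactly what the inequality $\gamma-1<\alpha$ provides, and it is the place where the hypotheses $\alpha,\beta\le n-1$ are actually used. I expect no genuine obstacle beyond keeping these cases straight.
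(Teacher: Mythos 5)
Your proof is correct. The paper gives no actual argument here --- it says only ``By direct computation'' --- so your write-up is filling in exactly the kind of bookkeeping the authors leave to the reader. Your organization is clean and does a bit better than brute force: the symmetry $N_{(\alpha,\beta)}^k=N_{(\beta,\alpha)}^k$ cuts the four cases to two, and the identity $N_{(\alpha,\beta)}^k=f(k)+f(k+n)$ (valid because $0\le x+y\le\alpha+\beta\le 2n-2$ so at most one wrap-around occurs) reduces the whole lemma to the elementary trapezoidal count $f$. The one point worth stating a touch more explicitly is the boundary value $k=\gamma$ in the wrap-around case: there $f(k+n)=f(\alpha+\beta+1)=0$, so $N^{\gamma}=f(\gamma)=\gamma+1$ by the first segment of $f$ (using $\gamma\le\alpha$), which is what makes the closed interval $0\le k\le\gamma$ in (3.10) correct rather than the half-open one your computation first produces. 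You do in effect note this via the check at $k=n-1$, but saying it directly at $k=\gamma$ would remove any ambiguity. No genuine gap.
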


\begin{proof}
By direct computation.
\end{proof}

\begin{lemma}
\begin{equation}
 \dim V_i=\frac{ef-e'f'}{n}+\frac{e+f-e'-f'}{n}+N_{(e',f')}^i.
\end{equation}
\end{lemma}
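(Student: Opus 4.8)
The idea is to relate the unrestricted counting problem for $\dim V_i$ in (3.10) to the "small" counting function $N_{(e',f')}^i$ of Lemma 3.5. Write $e=qn+e'$ and $f=rn+f'$ with $0\le e',f'\le n-1$; here $q=\frac{e-e'}{n}$ and $r=\frac{f-f'}{n}$. We need to count pairs $(k,l)\in\Z\times\Z$ with $0\le k\le e$, $0\le l\le f$, and $(k+l)'=i$. Decompose the range of $k$ into the $q$ full blocks $[0,n-1],[n,2n-1],\dots,[(q-1)n,qn-1]$ together with the residual block $[qn,qn+e']$ (of size $e'+1$), and similarly decompose the range of $l$. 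For each pair of blocks, the number of $(k,l)$ with $k+l\equiv i\pmod n$ depends only on whether each block is "full" (length $n$) or "residual" (length $e'+1$ resp. $f'+1$).

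The key counting facts are: (i) if the $k$-block is full, then for each $l$ in the $l$-block there is exactly one $k$ in the block with $k+l\equiv i$, so a (full $k$-block)$\times$(anything) pair contributes (size of the $l$-block); symmetrically for a full $l$-block; and (ii) the only pair of blocks that is not full in either coordinate is (residual $k$-block)$\times$(residual $l$-block), which by definition contributes exactly $N_{(e',f')}^i$. Counting the full-block contributions: there are $q$ full $k$-blocks, each pairing with the full $l$-blocks and the residual $l$-block, and similarly $r$ full $l$-blocks; summing the block-sizes carefully gives $q(rn+f'+1)+r(qn+e'+1)+ \text{(something)}$, but it is cleaner to argue as follows. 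Among all $(k,l)$ with $0\le k\le e$, $0\le l\le f$ lying in at least one full block, the map sending such a pair to its residue $(k+l)'$ is "uniform": each residue $0,\dots,n-1$ is hit the same number of times, namely $\frac{1}{n}\bigl((e+1)(f+1)-(e'+1)(f'+1)\bigr)$ of them. Hence
\begin{equation}
\dim V_i=\frac{(e+1)(f+1)-(e'+1)(f'+1)}{n}+N_{(e',f')}^i,
\end{equation}
and expanding $(e+1)(f+1)-(e'+1)(f'+1)=ef-e'f'+e+f-e'-f'$ yields exactly (3.15).

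The one genuine point requiring care — the main obstacle — is the uniformity claim: that the pairs $(k,l)$ which lie in at least one full block distribute evenly over the $n$ residue classes of $k+l$. This follows because such pairs are exactly those with $(k,l)\notin[qn,qn+e']\times[rn,rn+f']$, i.e. the complement inside the product box $[0,e]\times[0,f]$ of the residual$\times$residual box; and for a box $[0,e]\times[0,f]$ with at least one side of length divisible by... — more precisely, one shows directly that $[0,e]\times[0,f]$ minus the residual corner can be partitioned into rectangles each having at least one side of length exactly $n$, and any such rectangle meets each residue class of the sum in the same number of points (a single row of length $n$ already does). I would present this partition explicitly (three rectangles: $[0,qn-1]\times[0,f]$, $[qn,e]\times[0,rn-1]$, and the already-isolated corner), check each of the first two has a side of length divisible by $n$, and conclude. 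The rest is the routine algebraic identity rewriting $(e+1)(f+1)-(e'+1)(f'+1)$ in the form appearing in (3.15), together with invoking (3.9) to identify the count with $\dim V_i$.
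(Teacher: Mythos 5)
Your proof is correct and follows essentially the same route as the paper: decompose the lattice box $[0,e]\times[0,f]$ into a residual corner $[qn,e]\times[rn,f]$ (contributing $N_{(e',f')}^i$) plus rectangles each with one side of length divisible by $n$ (which contribute uniformly across residue classes), then do the same arithmetic. The paper uses four rectangles $U_1,\dots,U_4$ and counts each explicitly, whereas you merge two of them and state the uniformity principle once, but the underlying idea is identical.
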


\begin{proof}
Write $e=\alpha n + e',$ $f =\beta n+ f'$ and $$\{(x,y):0\leq
x\leq e,0\leq y\leq f\}=U_1\cup U_2\cup U_3 \cup U_4$$ where
\begin{eqnarray*}
U_1&=&\{(x,y):0\leq x\leq \alpha n-1, 0\leq y\leq \beta n-1\}, \\
U_2&=&\{(x,y):\alpha n\leq x\leq e, 0\leq y\leq \beta n-1\}, \\
U_3&=&\{(x,y):0\leq x\leq \alpha n-1, \beta n\leq y\leq f\}, \\
U_4&=&\{(x,y):\alpha n\leq x\leq e, \beta n\leq y\leq f\}.\\
\end{eqnarray*}
By easy computation and Lemma 3.3 one has:
\begin{eqnarray*}
|\{(x,y)\in U_1:(x+y)'=i\}|&=&n\alpha\beta, \\
|\{(x,y)\in U_2:(x+y)'=i\}|&=&\beta(e'+1), \\
|\{(x,y)\in U_3:(x+y)'=i\}|&=&\alpha(f'+1), \\
|\{(x,y)\in U_4:(x+y)'=i\}|&=&N_{(e',f')}^i.
\end{eqnarray*}
Now we obtain:
\begin{eqnarray*}
\dim V_i &=& n\alpha\beta + \alpha f' +e'\beta +\alpha +\beta +N_{(e',f')}^i \\ &=& \frac{ef-e'f'}{n}+\frac{e+f-e'-f'}{n}+N_{(e',f')}^i.
\end{eqnarray*}
\end{proof}

\subsection{The Clebsh-Gordan formulae}
The main objective of this subsection is to compute the decomposition of $V=V(0,e)\otimes V(0,f)$ which will easily lead to the general Clebsh-Gordan formulae for $\c=\c(n,s,q).$

By $S_i(U)$ we denote the number of direct summands of the form $V(i,?)$ that appear in the decomposition of $U \in \c.$ As in Subsection 2.3, we view $U$ as a representation of the quiver $Z_n$ and let $T_i :U_i\rightarrow U_{i+1}$ be the linear map attached to the arrow $a_i.$ Write
\begin{equation*}
U_i'=\ker T_i=\{x\in U_i: T_i(x)=0\}.
\end{equation*}

\begin{lemma}
\begin{equation}
S_i(U)=\dim U_i-\dim U_{(i-1)'}+\dim U'_{(i-1)'}.
\end{equation}
\end{lemma}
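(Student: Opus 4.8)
The plan is to exploit the structure of $U$ as a representation of the cyclic quiver $Z_n$ together with the nilpotency condition $T_i^l = 0$ for $l \ge d$. Since the Krull--Schmidt theorem holds, $U$ decomposes into indecomposables $V(i,?)$, and I want to count how many of these have ``socle'' at the vertex $g^i$ — or more precisely, how many indecomposable summands begin at vertex $g^i$. Recall from Subsection 2.3 that the indecomposable $V(i,e)$ is a ``string'' module supported on the consecutive vertices $g^i, g^{i+1}, \dots, g^{i+e}$ (indices mod $n$), on which the arrow maps act by successive isomorphisms $v_m^i \mapsto v_{m+1}^i$, and the map attached to the last arrow (out of $g^{(i+e)'}$) kills the top $v_e^i$. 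So in $V(i,e)$, the one-dimensional piece at its starting vertex $g^i$ is \emph{not} in the image of the incoming arrow $T_{(i-1)'}$, while every other vertex-component of $V(i,e)$ that is hit (namely those at $g^{i+1}, \dots, g^{i+e}$) lies entirely in the image of the preceding arrow.

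With that picture in place, the key step is a dimension count at a single vertex $g^i$, carried out summand by summand. Fix an indecomposable summand $W \cong V(j,f)$ of $U$. I would check the following contributions:
\begin{itemize}
\item to $\dim U_i$: $W$ contributes $1$ if $g^i$ lies in the support of $W$, i.e.\ if $i \in \{j, j+1, \dots, j+f\} \pmod n$, and $0$ otherwise;
\item to $\dim U_{(i-1)'}$: $W$ contributes $1$ if $g^{(i-1)'}$ lies in the support of $W$, i.e.\ if $i-1 \in \{j,\dots,j+f\} \pmod n$;
\item to $\dim U'_{(i-1)'} = \dim \ker(T_{(i-1)'})$: $W$ contributes $1$ precisely when $g^{(i-1)'}$ is the \emph{last} vertex of the support of $W$, i.e.\ when $(i-1)' = (j+f)'$, equivalently when $W$ starts at $g^i$; in all other cases the restriction of $T_{(i-1)'}$ to $W_{(i-1)'}$ is injective and contributes $0$ to the kernel.
\end{itemize}
Now I add: $\dim U_i - \dim U_{(i-1)'} + \dim U'_{(i-1)'}$, computed summand by summand, equals the number of summands $W$ for which the combined local contribution is $1$. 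Casework on whether $g^{i-1}$ and $g^i$ lie in the support of $W$ shows that the three terms cancel to give $1$ exactly when $g^i$ is in the support but $g^{i-1}$ is not — which is exactly the condition that $W$ starts at vertex $g^i$, i.e.\ $W \cong V(i,?)$ — and $0$ in all other configurations (both vertices in the support, giving $1-1+1-1=0$ once the kernel term for a middle vertex vanishes; neither vertex in the support, giving $0$; and $g^{i-1}$ in the support but not $g^i$, meaning $W$ ends at $g^{i-1}$, giving $0 - 1 + 1 = 0$). Summing over all summands yields $S_i(U)$.

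The main obstacle, and the only place where care is genuinely needed, is the boundary/wrap-around case: when $n \mid d$ (possible only if $s = 0$) a summand $V(i,e)$ with $e = d-1 = n-1$ has support that wraps all the way around the cycle, so that its starting vertex coincides with the vertex one step past its last, and one must make sure the ``last arrow kills the top'' bookkeeping is still consistent — but since the nilpotency forces $T^n = 0$ even in that case, the local contribution analysis above still applies verbatim at each vertex. A secondary subtlety is verifying that for an indecomposable string module the incoming arrow map $T_{(i-1)'}$ restricted to that summand really is injective unless $g^{(i-1)'}$ is the terminal vertex; this is immediate from the explicit action $v_m^j \mapsto v_{m+1}^j$ given in Subsection 2.3. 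Once these points are settled, the lemma follows by linearity of $\dim$ and $\ker$ over the direct sum decomposition.
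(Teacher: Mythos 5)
Your overall strategy is identical to the paper's: establish the formula for a single indecomposable $V(j,f)$, then extend to an arbitrary $U$ by additivity of $\dim$ and $\ker$ over direct sums. However, the per-summand bookkeeping has a genuine gap. You treat each of the three terms as being $0$ or $1$, on the grounds that the string $V(j,f)$ has ``support'' $\{j, j+1, \dots, j+f\} \bmod n$ and visits each vertex at most once. That picture is correct only when $f \le n-1$. The parameter $f$ ranges up to $d-1$, and whenever $s \ne 0$ one has $d = n^2/(s,n^2) > n$ (since $(s,n^2)\le s<n$), so indecomposables with $f \ge n$ genuinely arise; for them the string wraps around the cycle several times and some components $V(j,f)_i$ have dimension $\ge 2$. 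Your aside that ``$n \mid d$ is possible only if $s = 0$'' is also the reverse of the truth: $s=0$ forces $d \mid n$, while $s\ne 0$ forces $d>n$, so the multi-wrap phenomenon is the generic, not the exceptional, situation.

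To fix it, write $f = \alpha n + f'$ with $0 \le f' \le n-1$. Then $\dim V(j,f)_i = \alpha+1$ if $(i-j)' \le f'$ and $\alpha$ otherwise, and likewise for $\dim V(j,f)_{(i-1)'}$; the kernel term $\dim V(j,f)'_{(i-1)'}$ is $1$ exactly when $(j+f)'=(i-1)'$, equivalently $f'=(i-1-j)'$, and $0$ otherwise. A short case check on $(i-j)'$ versus $f'$ then shows the alternating sum equals $1$ when $i=j$ and $0$ when $i\ne j$, which is what the lemma needs. Your conclusion and the linearity step are sound, but the proposal as written establishes the per-summand identity only in the single-wrap case and so does not prove the lemma in full generality.
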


\begin{proof}
If $U=V(i,l),$ then one observes that \[\dim U_i-\dim U_{(i-1)'}+\dim U'_{(i-1)'}=1=S_i(U);\] if $U=V(m,l)$ with $m \neq i,$ one has \[\dim U_i-\dim U_{(i-1)'}+\dim U'_{(i-1)'}=0=S_i(U).\]

In general, if $U=\oplus V(j,l),$ then $U_i= \oplus V(j,l)_i$ and $U'_i=\oplus V(j,l)'_i.$ Now we have
\begin{eqnarray*}
S_i(U)&=&\sum S_i(V(j,l))\\
&=&\sum [\dim V(j,l)_i-\dim V(j,l)_{(i-1)'}+\dim V(j,l)'_i]\\
&=&\dim U_i-\dim U_{(i-1)'}+\dim U'_{(i-1)'}.
\end{eqnarray*}
\end{proof}

Now we come back to $V=V(0,e)\otimes V(0,f).$ To compute $S_i(V),$ it suffices to determine $\dim V'_i.$

\begin{proposition}
\begin{equation}\dim V'_i= \left\{
               \begin{array}{ll}
                 \mid \{j: f-e \leq j \leq f, (e+j)'=i\}\mid, & \hbox{if $e \le f$;} \\
                 \mid \{j: e-f \leq j \leq e, (e+j)'=i\}\mid, & \hbox{if $e >f.$}
               \end{array}
             \right.
\end{equation}
\end{proposition}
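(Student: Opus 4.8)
The plan is to compute $V'_i = \ker T_i$ directly from the comodule structure of $V = V(0,e)\otimes V(0,f)$ described in Subsection 2.3. Recall that $V$ has basis $\{v_k^0\otimes v_l^0\}_{0\le k\le e,\,0\le l\le f}$, that $v_k^0\otimes v_l^0$ sits at the vertex $g^{(k+l)'}$, and that the arrow $a_i$ acts by the map induced from the comodule structure (2.4): applied to a basis vector at vertex $g^i$, the map $T_i$ reads off the degree-one part of the coaction. First I would write out $T_i(v_k^0\otimes v_l^0)$ explicitly using (2.4) with $(i,a)=(0,k)$, $(j,b)=(0,l)$: the degree-one component is a linear combination of $v_{k+1}^0\otimes v_l^0$ and $v_k^0\otimes v_{l+1}^0$, with coefficients $C_{(0,k)(0,l)}^{(0,k+1)(0,l)}$ and $C_{(0,k)(0,l)}^{(0,k)(0,l+1)}$ coming from the multiplication rule (2.2) (products $p_{k'}^1\cdot p_{l'}^0$ and $p_{k'}^0\cdot p_{l'}^1$). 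These coefficients are explicit powers of $\q$ and $q$ times small Gaussian binomials; the key point is to record which of them vanish — a coefficient vanishes precisely when the target would require a path of length $\ge d$, but in degree one that never happens, so both coefficients are nonzero (the Gaussian binomial $\binom{1}{1}_{\q^{-s}q^{-1}}=1$). So on the subspace $V_i$, the map $T_i$ sends $v_k^0\otimes v_l^0$ to a \emph{nonzero} combination of the two "neighbors," whenever those neighbors exist in the basis; if $k=e$ the first neighbor is absent, if $l=f$ the second is absent.

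Next I would organize $V_i$ by the value $k$ (equivalently $l=i-k$ up to the mod-$n$ shift): fixing the residue, the basis vectors of $V_i$ form one or more "diagonal strings" $v_k^0\otimes v_l^0, v_{k+1}^0\otimes v_{l-1}^0,\dots$ running through the rectangle $[0,e]\times[0,f]$, and $T_i$ maps each such string (after the vertex shift) into the string one step over, in a bidiagonal fashion. Writing the matrix of $T_i$ in the basis ordered along these diagonals, it is lower/upper bidiagonal with nonzero entries except that a row/column is "truncated" at the two edges $k=e$ and $l=f$ of the rectangle. A direct rank count then shows $\dim\ker T_i$ equals the number of basis vectors $v_k^0\otimes v_l^0$ at vertex $g^i$ for which the image has no "continuation" — concretely the vectors lying on the far edge of the rectangle. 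Parametrizing that edge: when $e\le f$ the relevant edge corresponds to the indices $v_e^0\otimes v_l^0$ together with the boundary effect, which one rewrites (substituting $j$ for the $l$-coordinate shifted so that $k=e$) as $\{j: f-e\le j\le f,\ (e+j)'=i\}$; when $e>f$ the symmetric computation with the roles of $e,f$ interchanged gives $\{j: e-f\le j\le e,\ (e+j)'=i\}$. This matches (3.21).

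A cleaner way to run the same argument, which I would actually prefer to write up, is to use Lemma 3.6 backwards together with the already-known $\dim V_i$ from Lemma 3.4. We know $\sum_i S_i(V) = e+1+f+1 - 1$? — no: better, we know $S_i(V)$ is the number of indecomposable summands $V(i,?)$, and $\sum_i S_i(V)$ is the total number of indecomposable summands, which by comparing dimensions with the $N$-values from Lemma 3.3 equals $\min(e,f)+1$ up to the periodic corrections; but the honest route is simply to prove (3.21) first and \emph{then} feed it into Lemma 3.6. So I will not rely on that shortcut. The main obstacle is purely bookkeeping: carefully tracking, for each vertex $g^i$, exactly which of the diagonal strings in the rectangle $[0,e]\times[0,f]$ are truncated by the edge $\{k=e\}$ versus $\{l=f\}$, and checking that the wrap-around $(k+l)' = i$ does not cause double counting when $e+f\ge n$. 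Once the bidiagonal structure of $T_i$ along diagonals is set up, the kernel dimension is immediate; the case split $e\le f$ versus $e>f$ in the statement reflects which edge of the rectangle is the "binding" one, and handling both cases is symmetric under swapping the two tensor factors (which is legitimate at the level of quiver representations even though $\otimes$ need not be symmetric, since we only need it for the dimension count of $V_i'$).
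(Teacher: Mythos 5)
Your plan for the case $e\le f$ is essentially the argument the paper gives. The paper also passes to the quiver representation, uses the explicit formula for $T_i$ coming from (2.2), splits $V_i$ into the pieces $u_x$ supported on a single diagonal $k+l = nx+i$, and then counts which diagonals contribute a (one-dimensional) kernel; it goes slightly further by displaying the kernel vector $u_x = v_e^0\otimes v_j^0 + a_1v_{e-1}^0\otimes v_{j+1}^0 + \cdots$ and noting its coefficients are forced recursively, whereas you package the same observation as a rank count on the bidiagonal matrix of $T_i$ restricted to a diagonal. Those are the same computation. One wording in your writeup is loose and would need fixing in a final version: ``$\dim\ker T_i$ equals the number of basis vectors $v_k^0\otimes v_l^0$ at vertex $g^i$ for which the image has no continuation'' is not literally right, since the kernel is spanned by certain \emph{linear combinations} along diagonals rather than by basis vectors; the correct statement is that each diagonal $k+l=c$ with $\max(e,f)\le c\le e+f$ contributes exactly a one-dimensional kernel, and these are what your rank count should be counting.

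There is, however, a real problem in the $e>f$ case, and you missed it by ``pattern-matching'' to the stated formula rather than actually carrying out the swap. Interchanging $e$ and $f$ in the $e\le f$ argument produces the count $\bigl|\{j:\,e-f\le j\le e,\ (f+j)'=i\}\bigr|$, equivalently $\bigl|\{j:\,0\le j\le f,\ (e+j)'=i\}\bigr|$, \emph{not} $\bigl|\{j:\,e-f\le j\le e,\ (e+j)'=i\}\bigr|$. These differ in general: e.g.\ with $n=d=3$, $e=2$, $f=0$, one has $V(0,2)\otimes V(0,0)\cong V(0,2)$, whose only nonzero $V'_i$ is $V'_2=\k v_2^0$, so $\dim V'_2=1$ and $\dim V'_1=0$; the displayed formula would instead put the nonzero kernel at $i=1$. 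In other words, the case $e>f$ of the proposition as printed contains a typo (``$(e+j)'$'' should read ``$(f+j)'$''), and a proof-by-symmetry that simply reproduces the printed formula is not actually performing the symmetric computation. Since the later results (Propositions~3.7, Theorem~3.9) only invoke the $e\le f$ case and treat $e>f$ by symmetry as well, nothing downstream is broken, but your writeup should either state the corrected formula or explicitly perform the swap and note the discrepancy.
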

\begin{proof}
Clearly the two cases are symmetric, so it is enough to prove one of them. Assume $e \le f.$ Take the bases $\{v^0_l\}_{0 \le l \le e}$ and $\{v^0_m\}_{0 \le m \le f}$ for $V(0,e)$ and $V(0,f)$ respectively as before. Assume $u \in V'_i$. We can write $$u=\sum_{(l+m)'=i} \lambda_{lm} v_l^0\otimes v_m^0=\sum_x \sum_{l+m=nx+i} \lambda_{lm} v_l^0\otimes v_m^0.$$
Let $u_x=\sum_{l+m=nx+i} \lambda_{lm} v_l^0\otimes v_m^0.$ Then it is clear that $T(u)=0$ if and only if $T(u_x)=0, \forall x.$ Rewrite $u_x=\lambda_1 v^0_{l_1} \otimes v^0_{m_1} + \cdots + \lambda_y v^0_{l_y} \otimes v^0_{m_y}$ with $\lambda_k \ne 0, \forall k$, such that $l_1 > \cdots > l_y$ and hence $m_1 < \cdots < m_y.$ Recall that:
 $$T_i(v_x^0\otimes v_y^0)=\mathbbm{q}^{-sy}q^{-y} v_{x+1}^0\otimes v_y^0 + \mathbbm{q}^{\frac{sx[1+y-(1+y)']}{n}}v_x^0\otimes v_{y+1}^0 $$ if $x<e, y<f.$ It follows that if $l_1\neq e,$ we have: $$T_i(u_x)=\lambda_1\mathbbm{q}^{-sm_1}q^{-m_1} v_{l_1+1}^0\otimes v_{m_1}^0 + \sum \mathrm{other \ terms} \neq 0.$$
This forces $l_1=e$ if $T_i(u_x)=0.$ By a similar argument one has $m_y=f$ if $T_i(u_x)=0.$ Clearly it is necessary that $l_1 \le f$ and the subindex of the first factor of the last term $e+l_1-f \ge 0.$ That is, $f-e \le l_1 \le f.$

By appropriate rescaling $u_x$ is of the following form $$u_x=v_e^0\otimes v_j^0 + a_1v_{e-1}^0\otimes v_{j+1}^0 +\cdots +a_{(f-j)} v_{e+j-f}^0\otimes v_f^0$$ with $j=nx+i-e$ and $f-e \leq j \leq f.$ By imposing the condition $T_i(u_x)=0$ one can solve out a unique set of the $a_i$'s by recursion. As the explicit solution will not be used in later argument, we omit the calculation.

Now we have proved that $V'_i$ is spanned by elements like $u_x.$ Clearly the $u_x$'s are linearly independent. Therefore, the dimension of $V'_i$ is equal to the number of such $u_x$'s, which is exactly $\mid \{j: f-e \leq j \leq f, (e+j)'=i\}\mid.$
\end{proof}

With this preparation, now we can compute $S_i=S_i(V).$

\begin{proposition}
Suppose $e=\alpha n+e'$, $f=\beta n+f'$. Then we have:
\begin{equation} S_i=
\begin{cases} \alpha +1, & e \le f, \ 0\leq i\leq e'; \\ \alpha, & e \le f, \ e'<i\leq n-1; \\ \beta +1, & e >f, \ 0\leq i\leq f'; \\ \beta, & e >f, \ f'<i\leq n-1.
\end{cases}
\end{equation}
\end{proposition}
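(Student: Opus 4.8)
The plan is to combine the formula for $S_i(V)$ from Lemma 3.6 with the dimension counts already established. By Lemma 3.6 we have $S_i = \dim V_i - \dim V_{(i-1)'} + \dim V'_{(i-1)'}$, so the proof reduces to plugging in the explicit expressions for $\dim V_i$ from Lemma 3.5 and $\dim V'_i$ from Proposition 3.7. The global additive constant $\frac{ef-e'f'}{n} + \frac{e+f-e'-f'}{n}$ in $\dim V_i$ is independent of $i$, hence cancels in the difference $\dim V_i - \dim V_{(i-1)'}$, leaving $S_i = N^i_{(e',f')} - N^{(i-1)'}_{(e',f')} + \dim V'_{(i-1)'}$. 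So everything comes down to a finite case analysis on the relative sizes of $e'$ and $f'$ (and whether $e'+f' \le n-1$ or $\ge n$), using Lemma 3.3 for the $N$-values.

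I would treat the case $e \le f$ in detail; the case $e > f$ is symmetric under swapping the roles of the two factors (and of $e',f'$), exactly as in the proof of Proposition 3.7. Within $e \le f$, first I need to be slightly careful: $e \le f$ does not force $e' \le f'$, so I would split according to $e' \le f'$ versus $e' > f'$ and, inside each, according to $e'+f' \le n-1$ versus $e'+f' \ge n$, matching the four formulas in Lemma 3.3. In each sub-case the quantity $N^i_{(e',f')} - N^{(i-1)'}_{(e',f')}$ is $+1$, $0$, or $-1$ on explicit intervals of $i$, read directly off the piecewise-linear description; and $\dim V'_{(i-1)'}$, by Proposition 3.7, counts $\{j : f-e \le j \le f,\ (e+j)' = (i-1)'\}$, i.e. the number of $j$ in an interval of length $e+1$ lying in a fixed residue class mod $n$, which is either $\alpha$ or $\alpha+1$ depending on $i$ (since $e = \alpha n + e'$, an interval of $e+1$ consecutive integers meets a given residue class $\alpha+1$ times for $e'+1$ of the residues and $\alpha$ times for the remaining $n-e'-1$). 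Adding the two contributions should collapse, in every sub-case, to $\alpha+1$ for $0 \le i \le e'$ and $\alpha$ for $e' < i \le n-1$, which is the claim.

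The main obstacle is purely bookkeeping: keeping the interval endpoints consistent when passing from $i$ to $(i-1)'$ (the cyclic shift means the "boundary" indices wrap around, so one must be careful at $i = 0$ and at the junctions $i = e'$, $i = f'$, $i = \gamma$), and making sure the residue-class count for $\dim V'_{(i-1)'}$ is aligned with the same endpoints. A clean way to avoid juggling all four sub-cases separately is to first prove the identity $\sum_{i=0}^{n-1} S_i = (\alpha+1)(e'+1) + \alpha(n - e' - 1) = (e+1)$ — which is forced because $\sum_i S_i(V)$ equals the total number of indecomposable summands of $V$, and $V$ has dimension $(e+1)(f+1)$ with summands of dimension between $1$ and $d$, but more directly because $\sum_i \dim V_i'$ counts all the $u_x$'s which is exactly the number of summands — and then check only that $S_i$ takes the two claimed values, the multiplicities being pinned down by the total. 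I would still carry out the interval analysis to nail which $i$ gives $\alpha+1$, but this consistency check guards against sign errors. So the write-up would be: reduce via Lemma 3.6, cancel the constant, substitute Lemma 3.3 and Proposition 3.7, run the four-fold case analysis for $e \le f$, invoke symmetry for $e > f$, and (optionally) confirm with the summation identity.
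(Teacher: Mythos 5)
Your proof follows the same route as the paper: substitute the $\dim V_i$ and $\dim V'_{(i-1)'}$ formulas into $S_i=\dim V_i-\dim V_{(i-1)'}+\dim V'_{(i-1)'}$, let the $i$-independent term in $\dim V_i$ cancel, and run the four-fold case analysis over $e'\le f'$ versus $e'>f'$ and $e'+f'\le n-1$ versus $e'+f'\ge n$ (the paper carries out one branch in detail and declares the remaining three similar). Your additional observation that $\sum_i S_i = e+1$ pins down the multiplicities is a sound extra consistency check not present in the paper.
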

\begin{proof}
We prove only the cases with $e \leq f.$ Write $d^i$ for $\dim V'_i.$ Assume first $e'+f'\leq n-1$ and $e'\leq f'$.

If $e'+f' < n-1$, by applying Lemmas 3.3-3.5 we have:
\begin{equation*}S_i=
\begin{cases} d^{i-1}+1, & 0\leq i\leq e'; \\d^{i-1}, &
e'+1\leq i\leq f'; \\d^{i-1}-1, & f'+1\leq i\leq e'+f'+1;
\\d^{i-1}, & e'+f'+2\leq i\leq n-1.
\end{cases}
\end{equation*}
Further application of Proposition 3.6 leads to:
\begin{equation*}S_i=
\begin{cases} \alpha +1, & 0\leq i\leq e'; \\ \alpha,  &
e'+1\leq i\leq f'; \\ \alpha, & f'+1\leq i\leq e'+f'+1;
\\ \alpha, & e'+f'+2\leq i\leq n-1.
\end{cases}
\end{equation*}
Note that if $e'+f'+1=n-1$, the last case actually vanishes.

If $e'+f'=n-1$, then by Lemmas 3.3-3.5 we have:
\begin{equation*}S_i=
\begin{cases} d^{i-1}+1, & i=0; \\d^{i-1}, &
1\leq i\leq e'; \\d^{i-1}-1, & e'+1\leq i\leq f';
\\d^{i-1}, & f'+1\leq i\leq e'+f'=n-1.
\end{cases}
\end{equation*}
Following Proposition 3.6 we have:
\begin{equation*}S_i=
\begin{cases} \alpha +1, & i=0; \\ \alpha +1, & 1\leq i\leq e'; \\ \alpha, & e'+1\leq i\leq f';
\\ \alpha,  & f'+1\leq i\leq n-1.
\end{cases}
\end{equation*}
Therefore,
\begin{equation*}S_i=
\begin{cases} \alpha +1, & 0\leq i\leq e'; \\ \alpha,  & e'+1\leq i\leq n-1.
\end{cases}
\end{equation*}

The remaining three cases with $e'+f'\leq n-1, e'>
f'; e'+f'\geq n, e'\leq f'$ and $e'+f'\geq n, e'>f'$ respectively can be dealt with by the same process, so the detail is omitted.
\end{proof}

From now on, we shall understand $V(i,?)$ as $V(i',?)$ if $i \ge n.$ With this notation, we have the following nice form for the complete decomposition of $V=V(0,e) \otimes V(0,f).$

\begin{corollary}
\begin{equation}V=
\left\{
  \begin{array}{ll}
    V(0,?)\oplus V(1,?)\oplus \cdot \cdot \cdot \oplus V(e,?), & \hbox{if $e \le f;$} \\
    V(0,?)\oplus V(1,?)\oplus \cdot \cdot \cdot \oplus V(f,?), & \hbox{otherwise.}
  \end{array}
\right.
\end{equation}
\end{corollary}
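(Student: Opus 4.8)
The plan is to obtain the decomposition by simply reading off the summand counts $S_i=S_i(V)$ produced in Proposition 3.8. First I would recall from Subsection 2.3 that the indecomposable objects of $\c(n,s,q)$ are exactly the $V(i,\ell)$ with $0\le i\le n-1$ and $0\le \ell\le d-1$, so that, by the Krull-Schmidt theorem, $V=V(0,e)\otimes V(0,f)$ is a direct sum of objects of the form $V(i,?)$ with $0\le i\le n-1$; by the very definition of $S_i(U)$, the number of indecomposable summands of $V$ whose first index equals $i$ is $S_i$. Thus everything reduces to matching the numbers $S_i$ of Proposition 3.8 with the multiset of residues modulo $n$ of $0,1,\dots,e$ (in the case $e\le f$), respectively of $0,1,\dots,f$ (in the case $e>f$), recalling that in the displayed formula $V(i,?)$ is read as $V(i',?)$ for $i\ge n$.

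Second, I would carry out that matching. Suppose $e\le f$ and write $e=\alpha n+e'$. Among the integers $0,1,\dots,e$, reduction modulo $n$ sends exactly $\alpha+1$ of them, namely $i,i+n,\dots,i+\alpha n$, onto each residue $i$ with $0\le i\le e'$, and exactly $\alpha$ of them onto each residue $i$ with $e'<i\le n-1$. This is precisely the value of $S_i$ in the first two cases of Proposition 3.8, so the multiset of first indices of the indecomposable summands of $V$ equals $\{0',1',\dots,e'\}$, which is exactly the content of $V(0,?)\oplus V(1,?)\oplus\cdots\oplus V(e,?)$. The case $e>f$ is handled verbatim, with $e,e',\alpha$ replaced by $f,f',\beta$ and Proposition 3.8 applied in its last two cases.

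Finally I would record the built-in consistency checks, which also confirm that no summand is missing: summing Proposition 3.8 gives $\sum_{i=0}^{n-1}S_i=(e'+1)(\alpha+1)+(n-1-e')\alpha=\alpha n+e'+1=e+1$ (and $f+1$ in the other case), matching the number of terms displayed, and since each indecomposable summand $V(i,\ell)$ has dimension $\ell+1$ while $\dim V=(e+1)(f+1)$, the second indices (left unspecified, exactly as in the statement) are forced to add up correctly. There is no genuinely hard step here: the corollary is a purely combinatorial repackaging of Proposition 3.8, and the only point requiring care is lining up the block pattern $\{0,\dots,e'\}$ versus $\{e'+1,\dots,n-1\}$ of Proposition 3.8 with the residues arising from $0,1,\dots,e$ under reduction modulo $n$.
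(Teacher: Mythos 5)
Your proof is correct and takes essentially the same approach as the paper, which states only that the corollary is a ``direct consequence of Proposition 3.7'' (what you call Proposition 3.8 is in fact Proposition 3.7 in the paper's numbering). You have merely filled in the combinatorial matching of $S_i$ with the multiplicities of residues modulo $n$ among $0,1,\dots,e$ (resp.\ $0,1,\dots,f$), which is exactly the content the paper leaves implicit.
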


\begin{proof}
Direct consequence of Proposition 3.7.
\end{proof}

Now we are in the position to give the main result of this section.

\begin{theorem}
If $e+f\leq d-1,$ then
\begin{equation} V=
\left\{
  \begin{array}{ll}
   \bigoplus_{i=0}^e V(i,e+f-2i), & \hbox{if $e \le f$;} \\
   \bigoplus_{i=0}^f V(i,e+f-2i), & \hbox{otherwise.}
  \end{array}
\right.
\end{equation}

If $e+f \geq d,$ set $\gamma =e+f-d+1,$ then
\begin{equation}V=
\left\{
  \begin{array}{ll}
    \bigoplus_{i=0}^\gamma V(i,d-1) \oplus \bigoplus_{j=\gamma + 1}^{e} V(j,e+f-2j), & \hbox{if $e \le f$;} \\
    \bigoplus_{i=0}^\gamma V(i,d-1) \oplus \bigoplus_{j=\gamma + 1}^{f} V(j,e+f-2j), & \hbox{otherwise.}
  \end{array}
\right.
\end{equation}
\end{theorem}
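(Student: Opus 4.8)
The plan is to combine the two pieces of structural information already assembled in this section: Corollary 3.8, which tells us \emph{which} vertices $i$ support direct summands $V(i,?)$ in the decomposition of $V=V(0,e)\otimes V(0,f)$ and with what multiplicity $S_i$ (from Proposition 3.7), and a count of the total length (i.e.\ dimension) of $V$, which must equal $(e+1)(f+1)$. The only missing ingredient is the \emph{second index} of each indecomposable summand, i.e.\ its dimension; once we know $S_i$ for all $i$ and we know the list of available $V(j,l)$, a dimension-bookkeeping argument pins the summands down uniquely.

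\textbf{Step 1.} Restrict attention to the case $e\le f$ (the case $e>f$ is symmetric, swapping the roles of $e$ and $f$). By Corollary 3.8, $V$ decomposes as a direct sum of indecomposables of the form $V(i',?)$ with $0\le i\le e$, and by Proposition 3.7 the number of summands with first index congruent to a fixed $i$ is $S_i$, namely $\alpha+1$ if $0\le i\le e'$ and $\alpha$ if $e'<i\le n-1$, where $e=\alpha n+e'$. In particular the \emph{total} number of indecomposable summands of $V$ is $\sum_{i=0}^{n-1}S_i=n\alpha+(e'+1)=e+1$.

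\textbf{Step 2.} Now use the comodule structure, or equivalently the quiver-representation structure, to read off the dimensions. The key observation is that the decomposition-at-vertices picture (Lemma 3.4) together with the socle/kernel data (Proposition 3.6) lets us track, for each $i$, not just how many summands start at $g^i$ but how long each one is. Concretely, I would argue by induction on the ``layers'': the summands of maximal dimension are detected by $\dim V'_i$ versus $\dim V_i$ at the appropriate vertices, one peels off the top layer $\bigoplus_{i} V(i,d-1)$ (present only when $e+f\ge d$, and then exactly for $0\le i\le\gamma$ with $\gamma=e+f-d+1$, as forced by the truncation condition $T^l=0$ for $l\ge d$), and the remaining representation is again of the same shape with $(e,f)$ effectively replaced so that the next layer has length $e+f-2$, etc. At stage $j$ (counting from $j=0$), the new indecomposable summand emerging at vertex $g^j$ has dimension $e+f-2j+1$, i.e.\ it is $V(j,e+f-2j)$, and this process runs for $j=0,\dots,e$ (or terminates into the saturated blocks $V(i,d-1)$ once $e+f-2j$ would exceed $d-1$, which happens precisely for $j\le\gamma$). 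A cleaner way to finish: once we know from Step 1 that there are exactly $e+1$ summands and from Corollary 3.8 that their first indices are $0,1,\dots,e$ (each once, after accounting for the multiplicities $S_i$ which simply record the $\bmod\ n$ wrap-around), the list of dimensions $d_0\ge d_1\ge\cdots\ge d_e$ is determined by the single constraint $\sum_j(d_j+1)=(e+1)(f+1)$ together with the ``staircase'' shape forced by $\dim V_i$ being a piecewise-linear function of $i$ of slope $0,\pm1$ (Lemmas 3.3--3.4): this forces $d_j=e+f-2j$ in the untruncated range and $d_j=d-1$ in the saturated range.

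\textbf{Step 3.} Verify the bookkeeping in both regimes. If $e+f\le d-1$ then no truncation occurs, $\gamma\le 0$, and we get $V=\bigoplus_{i=0}^e V(i,e+f-2i)$; check $\sum_{i=0}^e(e+f-2i+1)=(e+1)(f+1)$, which holds. If $e+f\ge d$, the summands that would have dimension $\ge d$ get capped at $d-1$; the number of capped ones is the number of $i\in\{0,\dots,e\}$ with $e+f-2i\ge d-1$, i.e.\ $i\le\gamma$, giving $\gamma+1$ blocks $V(i,d-1)$, and the rest are $V(j,e+f-2j)$ for $\gamma+1\le j\le e$. Again one checks the dimension count $\sum_{i=0}^\gamma d+\sum_{j=\gamma+1}^e(e+f-2j+1)=(e+1)(f+1)$ (this is the routine calculation I would not grind through). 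Finally, translate back via Corollary 3.2 for the general $V(i,e)\otimes V(j,f)$, reindexing first indices mod $n$.

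\textbf{Main obstacle.} The genuine difficulty is Step 2: passing from the \emph{number} of summands at each vertex (which Propositions 3.6--3.7 give us) to their \emph{individual dimensions}. The multiplicities $S_i$ alone do not see the second index, so one must either run a careful layer-peeling induction on the quiver representation $V$ — controlling at each stage that the residual representation is again truncated at the right level and still of ``two-box-Young-diagram'' type — or else supply an independent argument (the piecewise-linearity of $i\mapsto\dim V_i$ from Lemma 3.4, combined with the total-length constraint) that rigidly forces the staircase of dimensions. I expect the cleanest writeup uses the latter: the shape of the function $\dim V_i$ determined in Lemma 3.4 is exactly the ``profile'' of the multiset $\{e+f-2i+1\}$ (suitably capped at $d$), and matching profiles of nested-interval length functions is elementary once set up correctly.
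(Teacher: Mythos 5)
Your Step 2 is where the argument actually has to do its work, and it does not close. You propose to recover the \emph{second} index of each summand from the numerical invariants established earlier in the section, namely the vertex dimensions $\dim V_i$ (Lemma 3.4), the kernel dimensions $\dim V'_i$ (Proposition 3.6), and the starting multiplicities $S_i$ (Proposition 3.7), together with the total dimension $(e+1)(f+1)$. But these invariants do not determine the decomposition. Concretely, take $e=2$, $f=3$, and $n,d$ large, and compare the correct answer
\[
A = V(0,5)\oplus V(1,3)\oplus V(2,1)
\]
with the impostor
\[
B = V(0,3)\oplus V(1,3)\oplus V(2,3).
\]
Both have total dimension $12=(e+1)(f+1)$, both have starting multiplicities $S_0=S_1=S_2=1$ and $S_i=0$ otherwise, both have exactly one summand terminating at each of the vertices $3,4,5$ (so the same $\dim V'_i$), and a direct count shows both have the same vertex-dimension profile $\dim V_i=1,2,3,3,2,1$ for $i=0,\dots,5$. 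Thus the ``staircase shape of $\dim V_i$ plus total-length constraint'' you invoke cannot distinguish $A$ from $B$, and the ``list of dimensions $d_0\ge\cdots\ge d_e$'' is simply not determined by this data. (The discrepancy would show up in $\dim\operatorname{Im}T_0^5$, but nothing in Lemmas 3.3--3.5 or Propositions 3.6--3.7 computes higher image or kernel dimensions.)

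Your alternate route in Step 2, the ``layer-peeling induction,'' is closer to what is actually needed, but you state it only as a plan and never verify the crucial point that after removing the top layer the residual representation is again of the form produced by a tensor product $V(0,e')\otimes V(0,f')$; without that, the induction does not set up. The paper avoids both problems by a genuine induction on $e$: it first pins down the base case $V(0,1)\otimes V(0,f)$ directly from the quiver-representation structure, by tracking the vanishing order of the operators $T_i^{l}$ on the two summands (not just vertex and kernel dimensions), deduces the identity $V(0,e-1)\otimes V(0,1)=V(0,e)\oplus V(1,e-2)$, and then tensors this identity with $V(0,f)$ and subtracts the known part, splitting into the sub-cases $e+f\le d-1$, $e+f=d$, $e+f=d+1$, $e+f>d+1$. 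That mechanism supplies exactly the missing ``which end pairs with which start'' information that your numerical argument cannot see. Your Steps 1 and 3 are fine as consistency checks, but they are not a proof without a correct Step 2.
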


\begin{proof}
As before, it is enough to show the case with $e \leq f.$ We prove by induction on $e.$ First of all we consider the case with $e+f \le d-1.$ When $e=0,$ the claim is trivial. Consider $e=1.$ By Corollary 3.8, we know $V(0,1) \otimes V(0,f) = V(0,l) \oplus V(1,m)$ for some $l,m.$

Now we view $V$ as a quiver representation and keep the same notations as in Section 2. Note that, under our assumptions, $f < d-1,$ $T_1^f(V) \neq 0$ and $T_1^{f+1}(V)=0.$ It follows that $m \leq f$ and so $T_0^f(V(1,m))=0.$ On the other hand, $T_0^{f+1}(V) \neq 0$ and $T_0^{f+2}(V)=0$ imply that $T_0^{f+1}(V(0,l))\neq 0$ and $T_0^{f+2}(V(0,l))=0.$ That is to say, $l=f+1.$ By comparing the dimensions, we get $m=f-1.$
Thus we have:
\begin{equation}V(0,1)\otimes V(0,f)=V(0,f+1)\oplus V(1,f-1).
\end{equation}
Similarly, one can prove that:
\begin{equation}
V(0,f)\otimes V(0,1)=V(0,f+1)\oplus V(1,f-1).
\end{equation} In particular, we obtain that:
\begin{equation} V(0,1)\otimes V(0,f)=V(0,f)\otimes V(0,1). \end{equation}

Now we assume $1<e \leq f$ and $e+f\leq d-1.$ By (3.21) we have
\begin{equation}
 [V(0,e)\oplus V(1,e-2)]\otimes V(0,f)=V(0,e-1)\otimes V(0,1)\otimes V(0,f).
\end{equation}
By the induction hypothesis, we have
\begin{eqnarray*}
V(1,e-2)\otimes V(0,f)
&=&V(1,0)\otimes V(0,e-2)\otimes V(0,f)\\
&=&\bigoplus_{i=0}^{e-2} V(i+1,e-2+f-2i),
\end{eqnarray*} and
\begin{eqnarray*}
\lefteqn{V(0,e-1)\otimes V(0,1)\otimes V(0,f)}\\
&=&V(0,e-1)\otimes [V(0,f+1)\oplus V(1,f-1)]\\
&=&[V(0,e-1)\otimes V(0,f+1)] \oplus [(V(1,0)\otimes V(0,e-1)\otimes V(0,f-1)]\\
&=& \bigoplus_{i=0}^{e-1} V(i,e+f-2i) \oplus \bigoplus_{j=0}^{e-1} V(j+1,e+f-2-2j).
\end{eqnarray*}
By subtracting the foregoing two identities, we obtain:
$$V(0,e)\otimes V(0,f)=\bigoplus_{i=0}^e V(i,e+f-2i).$$

Next we prove the case with $e+f \geq d.$  We consider first the case with $f=d-1.$ By Corollary 3.8 and by comparing the dimensions and the numbers of direct summands one can easily obtain:
\begin{equation}
V(0,e) \otimes V(0,d-1)= V(0,d-1)\otimes V(0,e)=\bigoplus_{i=0}^e V(i,d-1).
\end{equation}
In the rest of the proof, we can always assume $e \leq f <d-1,$ hence $e \le d-2.$

For the general situation, we split it into three cases. Namely, $e+f=d, \ e+f=d+1,$ and $e+f> d+1.$

If $e+f=d,$ by the induction hypothesis we have:
\begin{eqnarray*}
V(1,e-2)\otimes V(0,f)&=&V(1,0)\otimes V(0,e-2)\otimes V(0,f) \\
&=& \bigoplus_{i=0}^{e-2}V(i+1,e+f-2-2i)
\end{eqnarray*} and
\begin{eqnarray*}
&&V(0,e-1)\otimes V(0,1)\otimes V(0,f)\\
&=& [V(0,e-1)\otimes V(0,f+1)] \oplus [V(0,e-1)\otimes V(1,f-1)]\\
&=& \bigoplus_{i=0}^1V(i,d-1)\oplus \bigoplus_{j=2}^{e-1}V(j,e+f-2j) \oplus \bigoplus_{k=0}^{e-1}V(k+1,e+f-2-2k).
\end{eqnarray*}
In view of (3.23), we obtain the following by subtracting the above two identities:
\[V(0,e)\otimes V(0,f)=\bigoplus_{i=0}^1V(i,d-1)\oplus \bigoplus_{j=2}^{e}V(j,e+f-2j).\]

If $e+f=d+1$, by induction assumption we have $$V(1,e-2)\otimes V(0,f)=V(1,d-1)\oplus \bigoplus_{i=1}^{e-2}V(i+1,e+f-2-2i)$$ and
\begin{eqnarray*}
&&V(0,e-1)\otimes V(0,1)\otimes V(0,f)\\
&=& [V(0,e-1)\otimes V(0,f+1)]\oplus [V(0,e-1)\otimes V(1,f-1)]\\
&=& \bigoplus_{i=0}^2V(i,d-1)\oplus \bigoplus_{j=3}^{e-1} V(j,e+f-2j)\\
&& \oplus V(1,d-1)\oplus \bigoplus_{k=1}^{e-1}V(k+1,e+f-2-2k).
\end{eqnarray*}
Again subtracting the foregoing two identities gives us:
$$V(0,e)\otimes V(0,f)= \bigoplus_{i=0}^2V(i,d-1)\oplus  \bigoplus_{j=3}^e V(j,e+f-2j).$$

If $e+f > d+1$, then $\gamma =e+f-d+1>2$ and by induction hypothesis we have:

\begin{eqnarray*}
V(1,e-2)\otimes V(0,f)= \bigoplus_{i=0}^{\gamma-2}V(i+1,d-1)\oplus \bigoplus_{j=\gamma-1}^{e-2}V(j,e+f-2-2j).
\end{eqnarray*} and
\begin{eqnarray*}
&&V(0,e-1)\otimes V(0,1)\otimes V(0,f)\\
&=& [V(0,e-1)\otimes V(0,f+1)]\oplus [V(0,e-1)\otimes V(1,f-1)]\\
&=& \bigoplus_{i=0}^{\gamma}V(i,d-1)\oplus \bigoplus_{j=\gamma+1}^{e-1} V(j,e+f-2j)\\
&&  \oplus \bigoplus_{k=0}^{\gamma-2}V(k+1,d-1) \oplus \bigoplus_{l=\gamma-1}^{e-1}V(l+1,e+f-2-2l).
\end{eqnarray*}
By subtracting the above two identities, we obtain:
\begin{eqnarray*}V(0,e)\otimes V(0,f)= \bigoplus_{i=0}^\gamma V(i,d-1) \oplus \bigoplus_{j=\gamma + 1}^{e} V(j,e+f-2j).
\end{eqnarray*}

Thus the proof is completed.
\end{proof}

Now by combining Lemma 3.1 and Theorem 3.9, we arrive at the general Clebsh-Gordan formulae of $\c.$

\begin{theorem}
Let $V=V(i,e)\otimes V(j,f).$ If $e+f\leq d-1,$ then
\begin{equation}V=
\left\{
  \begin{array}{ll}
   \bigoplus_{k=0}^e V(i+j+k,e+f-2k), & \hbox{if $e \le f$;} \\
   \bigoplus_{k=0}^f V(i+j+k,e+f-2k), & \hbox{otherwise.}
  \end{array}
\right.
\end{equation}

If $e+f \geq d,$ set $\gamma =e+f-d+1,$ then

\begin{equation} V=
\left\{
  \begin{array}{ll}
    \bigoplus_{k=0}^\gamma V(i+j+k,d-1) \oplus \bigoplus_{l=\gamma + 1}^{e} V(i+j+l,e+f-2l), & \hbox{if $e \le f$;} \\
    \bigoplus_{k=0}^\gamma V(i+j+k,d-1) \oplus \bigoplus_{l=\gamma + 1}^{f} V(i+j+l,e+f-2l), & \hbox{otherwise.}
  \end{array}
\right.
\end{equation}
\end{theorem}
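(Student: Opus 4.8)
The plan is to deduce Theorem 3.10 directly from Theorem 3.9 using the reduction provided by the invertible object $V(1,0)$. By Corollary 3.2,
\[
V(i,e)\otimes V(j,f)=V(1,0)^{\otimes(i+j)}\otimes V(0,e)\otimes V(0,f),
\]
so it suffices to see what happens when the already-known decomposition of $V(0,e)\otimes V(0,f)$ from Theorem 3.9 is tensored on the left with $V(1,0)^{\otimes(i+j)}$. Since $\otimes$ is additive (indeed exact) in each variable, tensoring with $V(1,0)^{\otimes(i+j)}$ commutes with the finite direct sums appearing in Theorem 3.9, so it is enough to understand its effect on a single indecomposable summand.

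Next I would iterate formula (3.3) of Lemma 3.1: from $V(1,0)\otimes V(k,g)=V((k+1)',g)$ one gets, by induction on $t$, that $V(1,0)^{\otimes t}\otimes V(k,g)=V((k+t)',g)$ for all $t\ge 0$ and all indecomposables $V(k,g)$ of $\c$; the case $t=0$ uses (3.1). Hence $V(1,0)^{\otimes(i+j)}\otimes-$ sends each summand $V(k,\cdot)$ occurring in Theorem 3.9 to $V((i+j+k)',\cdot)$, that is, it shifts the first index by $i+j$ modulo $n$ and leaves the second index (and the index of summation) untouched.

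Then I would simply substitute. For instance, when $e+f\le d-1$ and $e\le f$, Theorem 3.9 gives $V(0,e)\otimes V(0,f)=\bigoplus_{k=0}^{e}V(k,e+f-2k)$, and applying $V(1,0)^{\otimes(i+j)}\otimes-$ yields $\bigoplus_{k=0}^{e}V((i+j+k)',e+f-2k)$, which under the convention (stated just before Corollary 3.8) that $V(i+j+k,?)$ abbreviates $V((i+j+k)',?)$ is precisely (3.28). The subcase $e>f$, and the case $e+f\ge d$ with its split into a $V(\cdot,d-1)$-part indexed by $0\le k\le\gamma$ (where $\gamma=e+f-d+1$) and a remaining part, are treated in exactly the same way: one copies the relevant line of Theorem 3.9 and shifts every first index by $i+j$. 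A quick sanity check: taking $i=j=0$ recovers Theorem 3.9 verbatim.

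There is essentially no obstacle in this argument; the genuine content was already established in Theorem 3.9, and Theorem 3.10 is its translation-equivariant restatement. The only points requiring care are the bookkeeping of the index convention $V(i+j+k,?)=V((i+j+k)',?)$ and the observation that the ranges of the direct sums ($0\le k\le e$, $\gamma+1\le l\le e$, etc.) are unaffected by the shift — so the write-up can afford to spell out one case in detail and dispatch the others with "by the same process".
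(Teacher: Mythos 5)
Your argument is correct and is exactly the route the paper takes: Theorem 3.10 is stated there as an immediate consequence of Lemma 3.1 (via Corollary 3.2 and the shift $V(1,0)^{\otimes t}\otimes V(k,g)=V((k+t)',g)$) together with Theorem 3.9, with no further proof given. You have merely spelled out the bookkeeping the paper leaves implicit.
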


\begin{remark}
As an immediate consequence of Theorem 3.9, we obtain the commutativity of the tensor product in $\c:$
\begin{equation}
V(i,e)\otimes V(j,f)=V(j,f)\otimes V(i,e).
\end{equation}
\end{remark}

\section{The Green rings}
Denote by $\GR(\c)$ the Green ring of $\c=\c(n,s,q).$  So far we know that $\{[V(i,l)]: 0\leq i \leq n-1,0\leq l\leq d-1\}$ form a basis of   $\GR(\c)$ and Theorem 3.9 provides the ring structure with explicit structure constants. The aim of this section is to present the Green ring $\GR(\c)$ by generators  and relations.

We start with the following lemma which summarizes some important information of $\GR(\c)$ provided by the previous sections.

\begin{lemma}
Let $1$ denote the identity of $\GR(\c).$ Then we have
\begin{itemize}
\item[1)] $[V(1,0)]^n=[V(0,0)]=1.$
\item[2)] $[V(0,l)]=[V(0,1)][V(0,l-1)]-[V(1,0)][V(0,l-2)],$ \quad $\forall \ 2 \leq l\leq d-1$.
\item[3)] $[V(0,1)][V(0,d-1)]=[V(0,d-1)]+[V(1,0)][V(0,d-1)].$
\end{itemize}
\end{lemma}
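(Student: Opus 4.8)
The plan is to verify each of the three identities directly from the Clebsch--Gordan formulae obtained in Theorem 3.9 (together with Lemma 3.1 and Corollary 3.2), since all three are statements about specific products in $\GR(\c)$ and the structure constants are now completely known.

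For item 1), I would simply quote (3.2), which says $V(1,0)^{\otimes n}=V(0,0)$, and note that $V(0,0)$ is the neutral object, hence $[V(0,0)]=1$ in $\GR(\c)$; this gives $[V(1,0)]^n=1$ immediately. For item 2), I would apply Theorem 3.9 with $i=j=0$, $e=1$, $f=l-1$, where $2\leq l\leq d-1$. Since $1+(l-1)=l\leq d-1$ we are in the first case of (3.18), and with $e=1\leq f=l-1$ it gives $V(0,1)\otimes V(0,l-1)=V(0,l)\oplus V(1,l-2)$. Next, using Lemma 3.1 (3.3), $V(1,0)\otimes V(0,l-2)=V(1,l-2)$. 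Subtracting the two identities in $\GR(\c)$ yields $[V(0,1)][V(0,l-1)]-[V(1,0)][V(0,l-2)]=[V(0,l)]$, which is exactly 2). (One should note the edge case $l=2$, where $V(0,l-2)=V(0,0)$ and $V(1,0)\otimes V(0,0)=V(1,0)$ by (3.1), so the formula still reads $[V(0,2)]=[V(0,1)]^2-[V(1,0)]$, consistent with $V(0,1)\otimes V(0,1)=V(0,2)\oplus V(1,0)$.)

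For item 3), I would apply Theorem 3.9 with $e=1$, $f=d-1$, so $e+f=d\geq d$ and $\gamma=e+f-d+1=1$. With $e=1\leq f$ we land in the first case of (3.19): the first summand $\bigoplus_{k=0}^{\gamma}V(k,d-1)=V(0,d-1)\oplus V(1,d-1)$, while the second summand $\bigoplus_{l=\gamma+1}^{e}V(l,e+f-2l)$ is empty since $\gamma+1=2>1=e$. Hence $V(0,1)\otimes V(0,d-1)=V(0,d-1)\oplus V(1,d-1)$. Finally $V(1,d-1)=V(1,0)\otimes V(0,d-1)$ by Lemma 3.1, so passing to $\GR(\c)$ gives $[V(0,1)][V(0,d-1)]=[V(0,d-1)]+[V(1,0)][V(0,d-1)]$, which is 3).

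There is essentially no serious obstacle here: the content of the lemma is entirely packaged in Theorem 3.9, and the proof amounts to reading off the right special cases and rewriting shifted indecomposables $V(1,\ast)$ as $V(1,0)\otimes V(0,\ast)$. The only points requiring a little care are checking that the index constraints ($e\leq f$, the value of $\gamma$, and whether the second direct sum in (3.19) is empty) are met in each instance, and handling the degenerate case $l=2$ in item 2) via (3.1). I would write the proof as three short paragraphs, one per item, each citing the relevant display from Section 3.
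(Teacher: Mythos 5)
Your proof is correct and follows essentially the same route as the paper's: the paper cites the intermediate displays (3.20) and (3.24) established inside the proof of Theorem 3.9, whereas you cite the final statement of Theorem 3.9 and specialize the parameters, but these are the same Clebsch--Gordan special cases ($V(0,1)\otimes V(0,l-1)=V(0,l)\oplus V(1,l-2)$ and $V(0,1)\otimes V(0,d-1)=V(0,d-1)\oplus V(1,d-1)$). Your handling of the edge case $l=2$ and the empty second direct sum in (3.19) is correct and a bit more explicit than the paper, which simply writes ``direct consequence.''
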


\begin{proof}
\begin{itemize}
\item[1)] Follows from Lemma 3.1.
\item[2)] Direct consequence of (3.20).
\item[3)] Direct consequence of (3.24).
\end{itemize}
\end{proof}

To present the Green ring $\GR(\c),$ we need the generalized Fibonacci polynomials used in \cite{coz,lz}. Set $f_1(x,y)=1$ and $f_2(x,y)=y,$ then define recursively
\begin{equation}
f_i(x,y)=yf_{i-1}(x,y)-xf_{i-2}(x,y), \quad \forall i > 2.
\end{equation}

\begin{lemma} \cite[Lemma 3.11]{coz} For all $n \ge 2,$ we have
\[ f_n(x,y)= \sum_{i=0}^{[\frac{n-1}{2}]}(-1)^i {n-1-i \choose i} x^iy^{n-1-2i}. \]
\end{lemma}


Now we have the following expression of element $[V(i,l)]$.

\begin{lemma} In the ring $\GR(\c)$
$$[V(i,l)]=[V(1,0)]^i f_{l+1}([V(1,0)],[V(0,1)]), \quad \forall 0 \le i \le n-1, \ l \leq d-1.$$ In particular, $\GR(\c)$ is generated by $[V(1,0)]$ and $[V(0,1)].$
\end{lemma}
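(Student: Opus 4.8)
The plan is to prove the identity $[V(i,l)]=[V(1,0)]^i f_{l+1}([V(1,0)],[V(0,1)])$ by first handling the case $i=0$ and then reducing the general case to it. For the case $i=0$ I would induct on $l$. The base cases $l=0$ and $l=1$ are immediate: $f_1=1$ gives $[V(0,0)]=1$, and $f_2(x,y)=y$ gives $[V(0,1)]=[V(0,1)]$. For the inductive step with $2\le l\le d-1$, I would invoke part 2) of Lemma 4.1, namely
\[
[V(0,l)]=[V(0,1)]\,[V(0,l-1)]-[V(1,0)]\,[V(0,l-2)],
\]
substitute the inductive hypotheses $[V(0,l-1)]=f_l([V(1,0)],[V(0,1)])$ and $[V(0,l-2)]=f_{l-1}([V(1,0)],[V(0,1)])$, and observe that the right-hand side is exactly $[V(0,1)]\,f_l-[V(1,0)]\,f_{l-1}$, which equals $f_{l+1}([V(1,0)],[V(0,1)])$ by the defining recursion (4.1) for the generalized Fibonacci polynomials. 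This closes the induction and proves $[V(0,l)]=f_{l+1}([V(1,0)],[V(0,1)])$ for all $0\le l\le d-1$.

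For the general case, I would use Corollary 3.2, which gives $V(i,l)=V(1,0)^{\otimes i}\otimes V(0,l)$, hence in the Green ring $[V(i,l)]=[V(1,0)]^i\,[V(0,l)]$. Combining this with the $i=0$ result just established yields
\[
[V(i,l)]=[V(1,0)]^i\, f_{l+1}([V(1,0)],[V(0,1)]),
\]
as claimed. Finally, for the "in particular" statement: since $\{[V(i,l)]\}_{0\le i\le n-1,\,0\le l\le d-1}$ is a $\Z$-basis of $\GR(\c)$ and every basis element is now exhibited as a polynomial in $[V(1,0)]$ and $[V(0,1)]$, these two elements generate $\GR(\c)$ as a ring.

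I do not expect a serious obstacle here: the result is essentially a bookkeeping consequence of the three relations collected in Lemma 4.1 together with Corollary 3.2 and the closed form of $f_n$ in Lemma 4.2 (the latter is not strictly needed for this lemma, only the recursion (4.1) is used). The one point requiring a little care is the range of validity: the recursion in part 2) of Lemma 4.1 is only asserted for $2\le l\le d-1$, so the induction is correctly confined to that range, and relation 3) of Lemma 4.1 is not needed for this lemma at all — it will enter later when one writes down the full list of relations. It is worth remarking that $[V(1,0)]$ is invertible in $\GR(\c)$ with inverse $[V(1,0)]^{n-1}$ by part 1) of Lemma 4.1, so one could equally well say $\GR(\c)$ is generated by $[V(0,1)]$ together with the invertible element $[V(1,0)]$; this observation is harmless here but will matter for the presentation in the sequel.
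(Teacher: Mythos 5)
Your argument is correct and fills in precisely what the paper's one-line proof (``Direct consequence of Lemma 4.1 2) and Lemma 4.2'') leaves implicit: an induction on $l$ via the recursion (4.1) to get the $i=0$ case, followed by the reduction $[V(i,l)]=[V(1,0)]^i[V(0,l)]$ from Corollary 3.2. Your side remark that only the defining recursion (4.1), not the closed form of Lemma 4.2, is actually used here is also accurate.
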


\begin{proof} Direct consequence of Lemma 4.1 2) and Lemma 4.2. \end{proof}

Finally we are ready to present the Green ring $\GR(\c).$

\begin{theorem}
Let $\Z[x,y]$ be the polynomial ring over $\Z$ with two variables $x$ and $y.$ By $J$ we denote the ideal of $\Z[x,y]$ generated by
$\{x^n-1,(y-x-1)f_{d}(x,y)\}.$ The Green ring $\GR(\c)$ is isomorphic to the quotient ring $\Z[x,y]/J.$
\end{theorem}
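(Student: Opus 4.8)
The plan is to exhibit an explicit surjective ring homomorphism $\phi\colon \Z[x,y]\to\GR(\c)$ with kernel exactly $J$. Define $\phi$ by $\phi(x)=[V(1,0)]$ and $\phi(y)=[V(0,1)]$; by Lemma 4.4 this is surjective, so it remains to identify $\ker\phi$. That $J\subseteq\ker\phi$ is the easy half: $\phi(x^n-1)=[V(1,0)]^n-1=0$ by Lemma 4.1(1), while $\phi\bigl((y-x-1)f_d(x,y)\bigr)=\bigl([V(0,1)]-[V(1,0)]-1\bigr)f_d([V(1,0)],[V(0,1)])$, and by Lemma 4.4 (with $i=0$, $l=d-1$) we have $f_d([V(1,0)],[V(0,1)])=[V(0,d-1)]$, so this expression is $[V(0,1)][V(0,d-1)]-[V(1,0)][V(0,d-1)]-[V(0,d-1)]$, which vanishes by Lemma 4.1(3). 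Hence $\phi$ factors through a surjection $\bar\phi\colon \Z[x,y]/J\to\GR(\c)$.

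The core of the argument is to show $\bar\phi$ is injective, and for this I would argue that $\Z[x,y]/J$ is spanned as a $\Z$-module by the $nd$ monomials $\{x^i f_{l+1}(x,y): 0\le i\le n-1,\ 0\le l\le d-1\}$, which $\bar\phi$ sends to the $\Z$-basis $\{[V(i,l)]\}$ of $\GR(\c)$; a surjective $\Z$-module map from a module generated by $nd$ elements onto a free module of rank $nd$ must be an isomorphism. To see the spanning claim, first note that $\{x^i y^j: 0\le i\le n-1,\ j\ge 0\}$ spans $\Z[x,y]/(x^n-1)$, and since $f_{l+1}(x,y)$ is monic of degree $l$ in $y$ (with coefficients in $\Z[x]$), the set $\{x^i f_{l+1}(x,y): 0\le i\le n-1,\ l\ge 0\}$ is also a spanning set. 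It therefore suffices to show that modulo $J$ each $f_{l+1}$ with $l\ge d$ lies in the $\Z[x]/(x^n-1)$-span (indeed the $\Z$-span after reducing powers of $x$) of $f_1,\dots,f_d$. Using the recursion $f_{l+1}=yf_l-xf_{l-1}$ it is enough to express $f_{d+1}$ in terms of lower ones modulo $J$: we have $yf_d\equiv (x+1)f_d \pmod J$ from the relation $(y-x-1)f_d\in J$, so $f_{d+1}=yf_d-xf_{d-1}\equiv (x+1)f_d-xf_{d-1} \pmod J$, which is a $\Z[x]$-combination of $f_d$ and $f_{d-1}$; an easy induction then handles all $f_{l+1}$, $l\ge d$. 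Combined with $x^n\equiv 1$, this shows $\Z[x,y]/J$ is generated over $\Z$ by the $nd$ elements $x^i f_{l+1}(x,y)$, $0\le i\le n-1$, $0\le l\le d-1$.

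The main obstacle is the final counting step: having a spanning set of size $nd$ and a surjection onto a free $\Z$-module of rank $nd$ does force bijectivity, but one should phrase this cleanly — e.g., $\bar\phi$ induces a surjection $\Z^{nd}\twoheadrightarrow\GR(\c)\cong\Z^{nd}$ of free $\Z$-modules of equal finite rank, which is necessarily an isomorphism (its matrix has determinant a unit, since the map is onto and $\Z$ is Noetherian / by the standard fact that a surjective endomorphism of a finitely generated module over a commutative ring is injective); pulling this back shows the $nd$ spanning monomials of $\Z[x,y]/J$ are $\Z$-linearly independent, hence $\bar\phi$ is injective. A secondary point requiring a little care is verifying that $f_{l+1}(x,y)$ is genuinely monic of degree $l$ in $y$ so that the change of spanning set from $\{x^iy^j\}$ to $\{x^if_{l+1}\}$ is valid; this is immediate from Lemma 4.2 or directly from the recursion $f_1=1$, $f_2=y$, $f_{i}=yf_{i-1}-xf_{i-2}$. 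With these pieces in place the isomorphism $\GR(\c)\cong\Z[x,y]/J$ follows.
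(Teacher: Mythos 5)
Your proposal is correct and follows essentially the same route as the paper: define the surjection $\Z[x,y]\to\GR(\c)$ via $x\mapsto[V(1,0)]$, $y\mapsto[V(0,1)]$, check $J\subseteq\ker$ using Lemma 4.1 and the identification $f_{l+1}([V(1,0)],[V(0,1)])=[V(0,l)]$, and then conclude by a rank count. The one place you go beyond the paper is that you actually justify the rank bound — the paper merely asserts that $\{\bar{x}^i\bar{y}^j\}_{0\le i<n,\,0\le j<d}$ is a $\Z$-basis of $\Z[x,y]/J$, whereas you prove the spanning statement by using the $f_{l+1}$'s as an alternative triangular basis and reducing $f_{d+1}$ via $(y-x-1)f_d\in J$, and then invoke the standard fact that a surjection between free $\Z$-modules of equal finite rank is an isomorphism; this is a genuine and welcome tightening of the argument.
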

\begin{proof}Define a ring homomorphism by
\begin{eqnarray*}
F: \Z[x,y] & \longrightarrow & \GR(\c) \\
         x  & \mapsto & [V(1,0)], \\
         y  & \mapsto & [V(0,1)].
\end{eqnarray*}
By Lemma 4.3, $F$ is surjective. Moreover, by 1) and 3) of Lemma 4.1 and Lemma 4.3 with $i=0$ and $l=d-1,$ it is easy to see that $J \subset \ker F.$ So $F$ induces a ring epimorphism:
\begin{eqnarray*}
\overline{F}: \Z[x,y]/J & \longrightarrow & \GR(\c)\\
        \overline{x} & \mapsto & [V(1,0)], \\
         \overline{y}  & \mapsto & [V(0,1)].
\end{eqnarray*}
Notice that $\{\overline{x}^i\overline{y}^j\}_{0\leq i\leq n-1,
0\leq j\leq d-1}$ is a $\Z$-basis of $\Z[x,y]/J$ hence its $\Z$-dimension is $nd,$ which is equal to the $\Z$-dimension of $\GR(\c).$ That is, the map $\overline{F}$ is an isomorphism.
\end{proof}

We conclude the paper with the following remarks.

\begin{remark} Keep the same notations $n,d,s,q,Z_n$ and $\c(n,s,q)$ as in Section 2.
\begin{enumerate}
  \item If one prefers to work on a path algebra rather than a path coalgebra, then one can realize $\c(n,s,q)$ as the representation category of the quasi-Hopf algebra $H(n,s,q)$ which is dual to $M(n,s,q).$ As an algebra, $H(n,s,q)$ is isomorphic to $kZ_n^a/J^d$ where $kZ_n^a$ is the associated path algebra of the quiver $Z_n$ and $J$ is the ideal generated by the set of arrows of $Z_n.$
  \item When $s=0,$ then $d=\operatorname{ord} q$ is a factor of $n$ and $M(n,0,q)$ is an $nd$-dimensional Hopf algebra. The dual of $M(n,0,q)$ was considered in \cite{cc} by Cibils and the corresponding Clebsch-Gordan formulae were obtained there.
  \item When $s=0$ and $\operatorname{ord} q=n,$ the algebra $M(n,0,q)$ is the Taft Hopf algebra \cite{t} whose Green ring $r(H_n(q))$ (of representation category) was presented in \cite{coz}. It is well-known that the Taft algebra is self-dual, hence $\GR(\c(n,0,q)$ and $r(H_n(q))$ coincide.
  \item The Green rings $r(H_{n,d})$ of the generalized Taft algebras \cite{hcz} were presented in \cite{lz}. It turns out that the representation category of $H_{n,d}$ is a pointed tensor category of finite type, but not connected. The category can be presented as $n/d$ copies of $\c(d,0,q)$ with $d=\operatorname{ord} q,$ see \cite{hcz,qha3}. The dual of $H_{n,d}$ is exactly $M(n,0,q)$ with $d=\operatorname{ord} q,$ see \cite{chyz}. Thus, the Green ring $\GR(\c(n,0,q))$ is the Green ring of the \emph{corepresentation} category of $H_{n,d},$  or the Green ring of the  Hopf algebra $H_{n,d}^*$.  As the generalized Taft algebras $H_{n,d}$ are not self-dual if $n \ne d,$  so it is natural that $\GR(\c(n,0,q))$ and $r(H_{n,d})$ are not isomorphic.
\end{enumerate}

\end{remark}

\vskip 5pt

\noindent{\bf Acknowledgements:} The research of Huang was partially
supported by the SDNSF grant 2009ZRA01128 and the IIFSDU grant
2010TS021. The paper was written while Huang was visiting the
University of Antwerp and he is very grateful for its hospitality. He acknowledges the Belgian FWO for the financial support which makes the visit possible.

\end{document}